\let\csname ver@amsthm.sty\endcsname\relax
\let\theoremstyle\relax
\let\qedhere\relax
\theoremstyle{plain}
\newtheorem{theorem}{Theorem}
\newtheorem*{theorem*}{Theorem}
\newtheorem{proposition}[theorem]{Proposition}
\newtheorem{lemma}[theorem]{Lemma}
\newtheorem{corollary}[theorem]{Corollary}
\newtheorem*{lemma*}{Lemma}
\newtheorem*{corollary*}{Corollary}
\newtheorem*{conjecture*}{Conjecture}
\theoremstyle{definition}
\newtheorem{defn}[theorem]{Definition}
\crefname{defn}{definition}{definitions}
\Crefname{defn}{Definition}{Definitions}
\crefname{thmqs}{}{XXX}
\crefname{thmhp}{}{XXX}
\newtheorem{example}[theorem]{Example}
\newtheorem*{defn*}{Definition}
\newtheorem*{convention*}{Convention}
\theoremstyle{remark}
\newtheorem{remark}[theorem]{Remark}
\newtheorem*{warning}{Warning}
\newenvironment*{explain}{\quad\\}{}
\newcommand{\set}{\mathbb}
\newcommand{\Natural}{\set{N}}
\newcommand{\Integer}{\set{Z}}
\newcommand{\epito}{\twoheadrightarrow}
\newcommand{\monoto}{\hookrightarrow}
\newcommand{\monofrom}{\hookleftarrow}
\newcommand{\isoto}{{\ \longrightarrow\hspace{-15pt}{}^\sim\hspace{8pt}}}
\newcommand{\closedto}{{\,\monoto\hspace{-0.45cm}{\shortmid}\hspace{0.25cm}}}
\newcommand{\closedfrom}{{\,\monofrom\hspace{-0.35cm}{\shortmid}\hspace{0.25cm}}}
\newcommand{\opento}{{\,\monoto\hspace{-0.45cm}{{}^{{}_\circ}}\hspace{0.25cm}}}
\newcommand{\setbar}{{\ |\ }}
\renewcommand{\tilde}{\widetilde}
\renewcommand{\phi}{\varphi}
\DeclareMathOperator{\hocolim}{hocolim}
\DeclareMathOperator{\hocofib}{hocofib}
\DeclareMathOperator{\Spec}{Spec}
\DeclareMathOperator{\HP}{HP}
\DeclareMathOperator{\OP}{OP}
\DeclareMathOperator{\Gr}{Gr}
\DeclareMathOperator{\Th}{Th}
\DeclareMathOperator{\SpGr}{SpGr}
\DeclareMathOperator{\Ho}{\mathcal{H}o}
\DeclareMathOperator{\sSet}{sSet}
\DeclareMathOperator{\Spc}{Spc}
\DeclareMathOperator{\GL}{GL}
\DeclareMathOperator{\Sp}{Sp}
\DeclareMathOperator{\Ffour}{F_{4}}
\DeclareMathOperator{\Esix}{E_{6}}
\DeclareMathOperator{\Spin}{Spin}
\DeclareMathOperator{\Sing}{Sing}
\DeclareMathOperator{\SO}{SO}
\DeclareMathOperator{\A}{\mathbb{A}}
\DeclareMathOperator{\Ao}{\mathbb{A}^1}
\DeclareMathOperator{\Po}{\mathbb{P}^1}
\DeclareMathOperator{\Gm}{\mathbb{G}_m}
\numberwithin{subsection}{section}
\author{Konrad Voelkel}
\title{Motivic Cell Structures for Spherical Varieties}
\begin{document}

\begin{abstract}
In this note, we give a general method to obtain unstable motivic cell structures, following Wendt's application~\cite{wendtcell10} of the Bia{\l}ynicki-Birula algebraic Morse theory. 
We then apply the method to spherical varieties, with special attention to the case of rank $1$, to obtain unstable motivic cell structures after a finite number of $\Po$-suspensions.
This refines the toolkits of Dugger--Isaksen and Wendt.
\end{abstract}

\maketitle

We give an explicit proof (on \cpageref{proofof:spherical-stably-cellular}) of
\begin{theorem}\label{thm:spherical-stably-cellular}
  Let $k$ be a field and $X$ a spherical $k$-variety,
  then $X$ is motivic stably cellular in the sense of Dugger--Isaksen. 
\end{theorem}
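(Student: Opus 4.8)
The plan is to show that the suspension spectrum $\Sigma^\infty_+ X$ lies in the localising subcategory of $\mathrm{SH}(k)$ generated by the motivic spheres, by building $X$ from the Thom spaces of the normal bundles of its $B$-orbits and checking that each of these Thom spectra is already a finite wedge of motivic spheres. I assume throughout that the acting reductive group $G$ is split with split Borel $B$ and that $X$ is smooth (a word on the general case comes at the end). Since $X$ is spherical it has only finitely many $B$-orbits $\mathcal{O}_1,\dots,\mathcal{O}_m$; numbering them by nondecreasing dimension yields $\overline{\mathcal{O}_i}\subseteq\mathcal{O}_1\cup\dots\cup\mathcal{O}_i$ for all $i$, so that $F_i:=\mathcal{O}_1\cup\dots\cup\mathcal{O}_i$ is a $B$-stable closed subvariety and $U_i:=X\setminus F_{i-1}$ a smooth open subvariety, with $U_0=X$ and $U_{m+1}=\emptyset$. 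Inside $U_i$ the orbit $\mathcal{O}_i=F_i\cap U_i$ is a smooth $B$-homogeneous closed subvariety whose open complement is $U_{i+1}$.

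For each $i$ the closed immersion $\mathcal{O}_i\hookrightarrow U_i$ of smooth $k$-varieties is regular, so the Morel--Voevodsky homotopy purity theorem identifies the quotient $U_i/U_{i+1}$ with the Thom space $\Th(N_i)$ of the normal bundle $N_i$ of $\mathcal{O}_i$ in $U_i$; applying $\Sigma^\infty$ to the cofibre sequence of pointed spaces $(U_{i+1})_+\to(U_i)_+\to U_i/U_{i+1}$ gives the cofibre sequence
\[
  \Sigma^\infty_+ U_{i+1}\;\longrightarrow\;\Sigma^\infty_+ U_i\;\longrightarrow\;\Sigma^\infty\Th(N_i).
\]
This is exactly the situation treated by the general stratification method of this paper, following Wendt's use of Bia{\l}ynicki--Birula Morse theory~\cite{wendtcell10}: a smooth variety all of whose strata, and all of whose successive open complements, are smooth is assembled from the Thom spectra of the normal bundles of those strata by iterated cofibre sequences. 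Since the cellular spectra form a localising subcategory, a downward induction on $i$ --- with base $\Sigma^\infty_+ U_m=\Sigma^\infty_+\mathcal{O}_m$, the open $B$-orbit --- reduces the theorem to proving that each $\Sigma^\infty\Th(N_i)$ is cellular.

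Here the geometry of spherical varieties enters. The structure theory of spherical embeddings shows that, for split $G$ and $B$, every $B$-orbit in a spherical variety is isomorphic to $\A^{a}\times\Gm^{b}$ for suitable $a,b\ge 0$. On such a variety every vector bundle is trivial: this is the Quillen--Suslin theorem together with Swan's extension to Laurent polynomial rings, which says that finitely generated projective $k[x_1,\dots,x_a,t_1^{\pm1},\dots,t_b^{\pm1}]$-modules are free. Hence each $N_i$ is trivial, of rank $n_i$ say, so $\Th(N_i)\simeq\Po^{\wedge n_i}\wedge(\mathcal{O}_i)_+$, and stably $\Sigma^\infty\Th(N_i)\simeq S^{2n_i,n_i}\wedge\Sigma^\infty_+\mathcal{O}_i$ with $\Sigma^\infty_+\mathcal{O}_i\simeq\Sigma^\infty_+(\A^{a}\times\Gm^{b})\simeq\bigl(S^{0,0}\vee S^{1,1}\bigr)^{\wedge b}$ a finite wedge of motivic spheres (using that $\A^{a}$ is $\Ao$-contractible and $S^{1,1}=\Gm$); taking $n_i=0$ also covers the base case $\Sigma^\infty_+\mathcal{O}_m$. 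Thus every $\Sigma^\infty\Th(N_i)$ is a finite wedge of motivic spheres, and by the induction of the previous step so is $\Sigma^\infty_+ X$ --- in particular it is cellular (in fact finitely so), which is the assertion.

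The homological bookkeeping of the middle step is routine; the step I expect to be the main obstacle is the geometric input of the last one, that the $B$-orbits are as simple as $\A^{a}\times\Gm^{b}$, so that only trivial bundles occur among the normal bundles $N_i$. This is exactly what the Bia{\l}ynicki--Birula analysis of a generic one-parameter subgroup of $B$, combined with the classification of $B$-orbits on spherical varieties, provides, and it is the point where a purely formal argument would fail. Finally, if $X$ is not assumed smooth the open pieces $U_i$ may be singular and homotopy purity no longer applies on the nose: a spherical variety is normal but possibly singular, with closed $G$-stable singular locus, and one would first treat that locus (a spherical variety of strictly smaller dimension) by Noetherian induction and then glue, which calls for the additional input of $G$-equivariant resolution of spherical singularities by blow-ups along smooth $G$-stable centres together with a cdh-descent property for motivic spectra; I would carry out that reduction separately and otherwise argue as above.
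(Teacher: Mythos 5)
Your proposal is correct and follows the same skeleton as the paper's proof: stratify $X$ by $B$-orbits, apply homotopy purity to each closed orbit inside the successive open complements, and induct, using that every $B$-orbit is isomorphic to $\A^a \times \Gm^{\times b}$. Two points of genuine divergence are worth recording. First, the source of that orbit description is Rosenlicht's theorem on homogeneous spaces under split solvable groups (see \cref{prop:spherical-very-linear}, citing~\cite{rosenlicht63}), not a Bia{\l}ynicki--Birula analysis of a one-parameter subgroup; the latter enters only in the alternative argument for wonderful completions in \cref{remark:easy-spherical-cells}. Second, at the crucial Thom-space step you prove more than is needed: you trivialize each normal bundle $N_i$ via Swan's Laurent-polynomial extension of Quillen--Suslin, so that each $\Sigma^\infty\Th(N_i)$ is an explicit finite wedge of motivic spheres, whereas the paper only invokes that Thom spaces of arbitrary bundles over a totally cellular base are stably cellular~\cite[Corollary 3.10]{duggerisaksen05}, with no triviality required. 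Your route buys a finite, explicit stable cell decomposition at the cost of an extra (correct) input from projective module theory; the paper's is shorter. Finally, you are right to flag that homotopy purity requires the successive open pieces $U_i$ to be smooth, an issue the paper's three-line proof does not address for possibly singular normal spherical varieties; your proposed reduction via equivariant resolution and cdh descent is a reasonable, if heavy, way to close that gap.
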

\begin{corollary*}\label{cor:spherical-mtm}
The Voevodsky motive of a spherical variety is mixed Tate.
\end{corollary*}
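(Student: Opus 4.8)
The plan is to transport the stable cellularity furnished by \Cref{thm:spherical-stably-cellular} along the canonical linearisation functor
\[
  L\colon \mathcal{SH}(k) \longrightarrow \DM(k),
\]
obtained by smashing with the motivic Eilenberg--MacLane spectrum $H\Integer$ and then using the R{\o}ndigs--{\O}stv{\ae}r equivalence between $H\Integer$-modules and $\DM(k)$ (over a perfect field; with $\Integer[1/p]$-coefficients in characteristic $p$). The only features of $L$ I will need are that it is triangulated, symmetric monoidal, and a left adjoint --- hence preserves arbitrary coproducts and homotopy colimits --- together with the normalisations $L(\Sigma^\infty_+ Y) = M(Y)$ and $L(S^{p,q}) = \Integer(q)[p]$ for the bigraded spheres.

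First I would spell out what the conclusion of \Cref{thm:spherical-stably-cellular} says concretely: for a spherical $k$-variety $X$, the suspension spectrum $\Sigma^\infty_+ X$ lies in the localising subcategory of $\mathcal{SH}(k)$ generated by the spheres $S^{p,q}$, $p,q\in\Integer$ (this is exactly the Dugger--Isaksen notion of stable cellularity). Applying $L$ and using that it is triangulated and coproduct-preserving, one sees that $M(X)$ lies in the localising subcategory of $\DM(k)$ generated by the Tate objects $\Integer(q)[p]$; by definition this is the subcategory of mixed Tate motives, which already establishes the corollary. If moreover $M(X)$ is geometric, equivalently compact --- which holds when $X$ is smooth, and in general when $k$ has characteristic zero or one works with $\Integer[1/p]$-coefficients --- then, being a compact object that lies in the localising subcategory generated by the compact generators $\Integer(q)[p]$, it already lies in the \emph{thick} subcategory they generate, by Neeman's characterisation of compact objects in compactly generated triangulated categories. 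In that case $M(X)$ is even \emph{finite} mixed Tate: it carries a finite filtration whose graded pieces are finite sums of Tate twists.

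I expect that essentially all the content sits in \Cref{thm:spherical-stably-cellular} itself; once that is granted, the corollary is a formal consequence of the standard compatibility of $\mathcal{SH}(k)$ and $\DM(k)$ along $L$, with no further input from the geometry of spherical varieties. The one point genuinely worth checking is that \Cref{thm:spherical-stably-cellular} is a statement at the level of the stable homotopy category --- about the object $\Sigma^\infty_+ X\in\mathcal{SH}(k)$ --- and not merely at the level of, say, motivic (co)homology; but this is precisely what the Bia{\l}ynicki-Birula cell structures constructed in the body of the note provide, so the transport along $L$ applies verbatim.
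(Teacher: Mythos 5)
Your proof is correct and follows essentially the same route as the paper, which records the key step as the unnumbered Lemma in Section~2: a stably cellular space has mixed Tate motive because $M(\S^{p,q}) = \mathbbm{1} \oplus \mathbbm{1}(q)[p]$ and the homotopy colimits building the cell structure translate into coproducts and distinguished triangles in $\DM$. Your packaging via the linearisation functor and localizing subcategories is a more formal rendering of that same argument (and the compactness/finite-filtration refinement is a bonus the paper does not pursue), so there is nothing to object to.
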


Dugger and Isaksen chose to work mostly with stable cellularity because products of unstably cellular spaces are not necessarily unstably cellular (see~\cref{remark:products-bad-for-cells}).
We refine this investigation by taking only a finite amount of suspensions.
\begin{defn*}
Let $X$ be a motivic space. If the simplicial suspension $\Sigma^k X$ is motivic unstably cellular, we say $X$ is \emph{$k$-suspended cellular}.
\end{defn*}

To gain more control on cell structures, we prove (on \cpageref{proofof:cells-from-completion})
\begin{theorem}\label{thm:cells-from-completion}
  Let $X$ be a smooth $S$-variety and let $X \opento \overline{X}$ be a closed immersion into a smooth $S$-variety $\overline{X}$ with complement $D = \bigcup_{i=1}^n D_i$ a divisor with $n$ irreducible smooth components $D_i$. 
  If $\overline{X}$ is $n$-suspended cellular and each $D_i$ is atacc (see~\cref{defn:totally-affinely-contractible}), then $X$ is $(n+k)$-suspended cellular.
\end{theorem}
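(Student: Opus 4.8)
The plan is to remove the components of $D$ one at a time and run homotopy purity at each stage, keeping a running count of simplicial suspensions. Set $X_0 = \overline{X}$ and, for $1 \le j \le n$, let $X_j = \overline{X} \setminus (D_1 \cup \dots \cup D_j)$, so that $X_n = X$. Each $X_j \opento X_{j-1}$ is an open immersion of smooth $S$-varieties whose closed complement $Z_j := D_j \setminus (D_1 \cup \dots \cup D_{j-1})$ is an open subscheme of the smooth variety $D_j$, hence itself smooth of codimension one in $X_{j-1}$, with normal bundle a line bundle $L_j$ on $Z_j$; observe that no transversality between the $D_i$ is required, only that each $D_i$ is smooth, since $D_1 \cup \dots \cup D_{j-1}$ is closed in $\overline{X}$. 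Homotopy purity then furnishes, for every $j$, a homotopy cofiber sequence
\[ (X_j)_+ \longrightarrow (X_{j-1})_+ \longrightarrow \Th(L_j). \]

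Next I would feed in the hypothesis that the $D_i$ are atacc. By \cref{defn:totally-affinely-contractible} this property passes to the open subschemes $Z_j \subseteq D_j$ and, more importantly, forces the Thom space $\Th(L_j)$ of the line bundle $L_j$ over the atacc base $Z_j$ to be $k$-suspended cellular. This is the point at which the full strength of ``totally'' in \emph{totally affinely contractible} is spent: it must guarantee not merely that $D_j$ becomes cellular after $k$ suspensions, but that the locally closed strata cut out on $D_j$, together with the Thom spaces of their normal bundles, are as well. Thus in each of the $n$ cofiber sequences above the last term is $k$-suspended cellular, while the bottom term $(X_0)_+ = \overline{X}_+$ is $n$-suspended cellular by hypothesis.

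It then remains to propagate cellularity down the tower $\overline{X}_+ = (X_0)_+, (X_1)_+, \dots, (X_n)_+ = X_+$. For this I would invoke the behaviour of $k$-suspended cellularity under homotopy cofiber sequences: in a cofiber sequence $A \to B \to C$ with $B$ and $C$ suitably suspended-cellular, $A$ is suspended-cellular after a controlled number of further suspensions. Applying this inductively, starting from the $n$-suspended-cellular $\overline{X}_+$ and the $k$-suspended-cellular Thom spaces and collecting the suspension penalties, one obtains that $X_+$, hence $X$, is $(n+k)$-suspended cellular. Equivalently, one can phrase this as unwinding ``$\overline{X}$ is $n$-suspended cellular'' into a cell structure on $\Sigma^{n}\overline{X}_+$ and then, stratum by stratum along $D$, enlarging it to a cell structure on $\Sigma^{n+k}X_+$.

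The main obstacle is precisely this propagation step. Unlike the stable situation of Dugger--Isaksen, where cellular objects form a thick subcategory and a cofiber sequence with two cellular terms has a cellular third term for free, unstably the homotopy cofiber of a map between cellular motivic spaces need not be cellular --- there is no motivic cellular approximation theorem available to rectify the attaching maps. The content of the theorem, and the reason the suspension count grows with $n$, is that the homotopy-purity cofiber sequences arising from an open--closed decomposition of smooth varieties with atacc boundary strata can nevertheless be rectified into honest cell structures at the cost of finitely many suspensions; threading that cost through the $n$-step tower is the delicate part, whereas the geometric input (smoothness of the $Z_j$, homotopy purity, and the atacc hypothesis on the $D_i$) is routine.
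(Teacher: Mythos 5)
Your proposal follows the same route as the paper: strip off the divisor components one at a time, apply \cref{thm:hopurity} to each open--closed pair in the resulting tower, and use the atacc hypothesis to control the Thom spaces of the normal bundles. The one step you flag as the ``main obstacle'' --- passing cellularity back from $B$ and $C$ to $A$ in a cofiber sequence $A \to B \to C$ --- is resolved in the paper by simply rotating the sequence: extending it to $A \to B \to C \to \Sigma A$ exhibits $\Sigma A$ as the homotopy cofiber of $B \to C$, hence as a homotopy colimit of cellular spaces, at the cost of exactly one simplicial suspension; doing this once per component is precisely where the $+n$ in the suspension count comes from, so the ``delicate propagation'' is in fact a one-line rotation argument. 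Two small corrections to your bookkeeping: by \cref{thm:unstable-thom-cells} the Thom space of a vector bundle over an atacc base is unstably cellular outright, not merely $k$-suspended cellular, so the $k$ enters only through the hypothesis on $\overline{X}$; and both you and the paper tacitly assume that the strata $D_j \setminus (D_1 \cup \dots \cup D_{j-1})$ (equivalently, the components of $D \setminus D_n$ in the paper's induction) remain atacc, which \cref{defn:totally-affinely-contractible} does not obviously guarantee for open subschemes of an atacc variety --- this is a gap shared with, not introduced beyond, the paper's own argument.
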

\begin{corollary*}[{\Cref{thm:unstable-cells-after-single-suspension}}]
  Let $X$ be a homogeneous space under a split reductive group $G$ with a $G$-equivariant completion $X \opento \overline{X}$ such that the complement $D$ is irreducible (a two-orbit completion).
  Then $X$ is $1$-suspended cellular.
\end{corollary*}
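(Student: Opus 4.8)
The plan is to deduce this from \Cref{thm:cells-from-completion} applied with $n=1$. The complement $D=\overline X\setminus X$ is a single $G$-orbit; it is the unique closed orbit of the projective $G$-variety $\overline X$, hence equivariantly a generalized flag variety $G/Q$ with $Q\supseteq B$ a parabolic subgroup (using that $G$ is split), it is smooth because homogeneous, it is irreducible by hypothesis, and it is a divisor (codimension one), so \Cref{thm:cells-from-completion} applies with the single component $D_1=D$. It therefore suffices to verify the two remaining inputs: that $D$ is atacc, and that $\overline X$ is motivic unstably cellular. The first is routine: the Bruhat decomposition presents $G/Q$ as a filtrable union of affine cells $B\dot wQ/Q\cong\A^{\ell(w)}$ for $w\in W^Q$, with closures governed by the Bruhat order and with affine gluing maps, and since every piece is an affine space over $k$, each vector bundle that appears is trivial by Quillen--Suslin; hence $G/Q$ satisfies \cref{defn:totally-affinely-contractible}. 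So the whole statement reduces to showing that $\overline X$ is unstably cellular.

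For that I would run the Bia{\l}ynicki-Birula/Morse machinery set up earlier on the $G$-action on the smooth projective variety $\overline X$. Choosing a generic one-parameter subgroup $\lambda\colon\Gm\to T\subseteq G$, the attracting decomposition exhibits $\overline X$ as a filtrable union of affine bundles $X_j^+\to F_j$ over the connected components $F_j$ of the smooth projective fixed locus $\overline X^{\lambda}$, so that the subquotients of the associated filtration are Thom spaces of vector bundles over the $F_j$; by the cellularity criterion already established it then suffices to show that every $F_j$ is unstably cellular. The components $F_j$ contained in the closed orbit $D=G/Q$ are $\lambda$-fixed points, hence cellular; the components meeting the open orbit $X$ are where the two-orbit hypothesis is spent, since it constrains their geometry enough --- again a disjoint union of points, of flag varieties for the centralizer $Z_G(\lambda)$, or of smaller two-orbit varieties --- that an induction on $\dim\overline X$, equivalently an iteration of the Bia{\l}ynicki-Birula construction, closes.

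Granting this, \Cref{thm:cells-from-completion} applies and yields the conclusion: concretely, the homotopy-purity cofibre sequence $X_+\to\overline X_+\to\Th(N_{D/\overline X})$ together with the atacc property of $D$ identifies $\Th(N_{D/\overline X})$ with a cellular space after one simplicial suspension, so that $X$ is $1$-suspended cellular. The genuinely hard step is the middle one --- establishing that $\overline X$ is unstably cellular, i.e. controlling the $\lambda$-fixed loci sitting inside the open orbit and recognizing them as cellular; by comparison, verifying that $D$ is atacc and that it is a smooth divisor is bookkeeping.
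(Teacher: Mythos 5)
Your overall framework is right --- reduce to \Cref{thm:cells-from-completion} with $n=1$, check that $D$ is a smooth irreducible divisor and atacc, and then handle $\overline{X}$ --- and your treatment of $D$ (unique closed orbit $=$ generalized flag variety $G/Q$, Bruhat cells are affine spaces, hence atacc) agrees with the paper. But the step you yourself flag as ``the genuinely hard one'' is where the argument breaks down, and the paper closes it by a completely different (and much shorter) means that you are missing: by the classification of two-orbit completions due to Ahiezer (characteristic $0$) and Knop (positive characteristic), the completion $\overline{X}$ is itself a projective homogeneous space under a (possibly larger) reductive group --- e.g.\ $\overline{\HP^n}=\Gr(2,2n+2)$, homogeneous under $\GL_{2n+2}$ rather than $\Sp_{2n+2}$. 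Hence Wendt's unstable cell structure from the Bruhat decomposition applies directly to $\overline{X}$, and there is nothing left to prove about the open orbit's contribution.

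Your proposed substitute --- running Bia{\l}ynicki-Birula on $\overline{X}$ for a generic $\lambda\colon\Gm\to T$ and inducting on the fixed components $F_j$ --- has two genuine gaps. First, even granting that every $F_j$ is unstably cellular, the strata of the BB filtration are Thom spaces of vector bundles over the $F_j$, and the paper's Warning after \cref{thm:hopurity} is precisely that Thom spaces over (merely) cellular bases are not known to be unstably cellular; the criterion actually established (\cref{thm:unstable-thom-cells}) demands the base be atacc, which is strictly stronger than what your induction would deliver. This is exactly why Wendt's BB argument, as invoked in \cref{remark:easy-spherical-cells}, yields only a \emph{stable} cell structure on a general smooth projective $G$-variety. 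Second, your description of the fixed loci meeting the open orbit (``points, flag varieties for $Z_G(\lambda)$, or smaller two-orbit varieties'') is an unsubstantiated hope rather than a consequence of the two-orbit hypothesis, so the induction has no established base or step. Both problems evaporate once you know $\overline{X}$ is a generalized flag variety: its Bruhat cells are affine spaces, so every base occurring in the filtration is atacc and the unstable conclusion follows.
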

\begin{conjecture*}
 A two-orbit completable homogeneous space is unstably cellular.
\end{conjecture*}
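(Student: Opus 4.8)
The plan is to upgrade \Cref{thm:unstable-cells-after-single-suspension}, which gives $1$-suspended cellularity, by removing the single simplicial suspension, so the whole problem reduces to a desuspension. Write $\overline{X} = X \sqcup D$ for the two-orbit completion with $D$ the unique closed orbit. Homotopy purity gives a cofiber sequence $X_+ \to \overline{X}_+ \to \overline{X}/X \simeq \Th(N_{D/\overline{X}})$, and its Puppe continuation identifies the simplicial suspension $\Sigma X_+$ with the homotopy cofiber of the composite $\overline{X}_+ \to \Th(N_{D/\overline{X}})$. This is exactly where the extra suspension in \Cref{thm:cells-from-completion} comes from when the divisor is irreducible. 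So the conjecture follows once one shows that this cofiber sequence is already compatible with cell filtrations before suspending --- equivalently, that the connecting map $\Th(N_{D/\overline{X}}) \to \Sigma X_+$ desuspends compatibly with a cell structure, or that $X_+$ itself (not merely $\Sigma X_+$) is assembled by attaching cells along the maps occurring in the filtration.

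First I would check that the constituent spaces are unstably cellular on the nose. The closed orbit $D$ is projective homogeneous under $G$, hence carries a Bruhat decomposition with affine-space Schubert cells and is unstably cellular. The completion $\overline{X}$ is a smooth complete spherical variety, so a generic one-parameter subgroup $\lambda\colon \Gm \to T$ of a maximal torus $T \subset G$ has finite fixed locus and the resulting Bia{\l}ynicki-Birula decomposition is by affine cells; smooth complete varieties of this kind carry a Bia{\l}ynicki-Birula cell structure, and $\overline{X}$ being unstably cellular is already what is used for the preceding Corollary. Finally, since $D$ has a cell structure by Schubert cells, over each of which the normal bundle $N_{D/\overline{X}}$ is trivial, the Thom space $\Th(N_{D/\overline{X}})$ inherits a cell structure from that of $D$ --- its filtration subquotients are wedges of motivic spheres --- and so is unstably cellular.

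The heart of the argument, and the step I expect to be the main obstacle, is the desuspension. I would choose $\lambda$ so that its Bia{\l}ynicki-Birula decomposition of $\overline{X}$ restricts to a filtration of the open orbit $X$ by locally closed affine cells, and then try to show the attaching maps of this filtration are rigid by $G$-equivariance: $G$ is split reductive, hence $\A^1$-chain connected, and acts transitively on $X$, so the attaching data ought to be as constrained as in the classical picture of equivariant cells on homogeneous spaces. Concretely, the aim is to prove the boundary map $\Th(N_{D/\overline{X}}) \to \Sigma X_+$ factors through the suspension of the inclusion of a proper sub-skeleton of $X_+$, which would justify one simplicial desuspension. I expect the difficulty to concentrate here: unstable cellularity is not closed under homotopy cofibers --- this is precisely why Dugger--Isaksen work stably (cf.~\cref{remark:products-bad-for-cells}) --- so the desuspension cannot be formal and must genuinely use the geometry of there being a single closed orbit. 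Controlling the interaction of the $\Gm$-Morse stratification with the $G$-orbit stratification, and ruling out attaching maps that factor through motivic spheres of intermediate dimension, is what I would not expect to clear routinely; as a consistency check and a guide to the attaching data I would first verify the conjecture in rank $1$, where $D$ and $N_{D/\overline{X}}$ are small enough to compute the connecting map directly.
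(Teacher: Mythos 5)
This statement is stated in the paper as an open \emph{conjecture}: the paper proves only the $1$-suspended version (\cref{thm:unstable-cells-after-single-suspension}) and records that the unsuspended statement is known only in scattered special cases ($AQ_{2n-1}$ by the elementary affine-bundle argument of \cref{lemma:odd-quadrics}, $AQ_{2n}$ by Asok--Doran--Fasel, $\HP^n$ by a Panin--Walter-type construction), each by a bespoke geometric model rather than by any general mechanism. So there is no ``paper's own proof'' to compare against, and your proposal does not close the gap either: everything before your final paragraph (cellularity of $D$, of $\overline{X}$, and of $\Th(N_{D/\overline{X}})$ via \cref{thm:unstable-thom-cells}, plus the identification of $\Sigma X_+$ as the cofiber of $\overline{X}_+ \to \Th(N_{D/\overline{X}})$) is exactly the content already in \cref{thm:cells-from-completion} and \cref{thm:unstable-cells-after-single-suspension}, and the one step that would constitute new content --- the desuspension --- is the step you yourself flag as the expected obstacle and for which you give no mechanism.

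To be concrete about why that step is the whole problem: the class of unstably cellular objects is closed under homotopy colimits of cellular diagrams, so it is closed under \emph{forming} cofibers, but not under solving $\hocofib(f) \simeq Z$ for the source of $f$; from the purity sequence one only ever extracts $\Sigma X_+$, never $X_+$. Your proposed remedy --- that the connecting map $\Th(N_{D/\overline{X}}) \to \Sigma X_+$ should ``factor through the suspension of a proper sub-skeleton'' and that $G$-equivariance rigidifies the attaching data --- is a heuristic with no argument attached; nothing in the Bia{\l}ynicki-Birula or Bruhat machinery produces an unsuspended attaching map $\Th(N) \to X_+$, and the interaction between the $\Gm$-stratification of $\overline{X}$ and the two-orbit stratification is precisely what is not understood (indeed the BB cells of $\overline{X}$ need not restrict to a cell filtration of the open orbit $X$, which is affine-like rather than proper). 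The fact that every known case of the conjecture is settled by exhibiting an explicit geometric model of $X$ as an affine or spherical bundle over a smaller cellular space, rather than by desuspending the purity sequence, is further evidence that this route, as sketched, does not go through. The proposal is a reasonable research program, but it is not a proof.
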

This is known for affine split quadrics $AQ_{2n} = \SO_{n,n+1}/\SO_n\times \SO_{n+1}$ by~\cite[Theorem 2.2.5]{asokdoranfasel15}, $AQ_{2n-1} = \SO_{n,n}/\SO_n\times\SO_n$ by a well-known elementary argument (\cref{lemma:odd-quadrics}) and quaternionic projective space $\HP^n = \Sp_{2n+2}/\Sp_2\times\Sp_{2n}$ by~\cite[Theorem 4.4.8]{voelkel16} (with the idea coming from Panin--Walter~\cite{paninwalter10a}). In future work, we intend to discuss the other two-orbit completions, like the Cayley plane $\OP^2$, in more detail.

On \cpageref{proofof:subspace-arrangements}, we prove
\begin{theorem}\label{thm:subspace-arrangements}
  Let $\mathcal{A} = \{L_i\}_{i=0}^r$ an arrangement of $r+1$ linear subspaces $L_i \closedto \mathbb{A}^n$ of a fixed affine space $\mathbb{A}^n$.
  Then the complement $X := \mathbb{A}^n \setminus \bigcup \mathcal{A}$ is $r$-suspended cellular.
\end{theorem}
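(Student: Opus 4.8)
The plan is to induct on the number $r+1$ of subspaces in $\mathcal{A}$, peeling off one subspace at a time and invoking Morel--Voevodsky's homotopy purity theorem at each step. Note that \Cref{thm:cells-from-completion} cannot be applied directly, since the $L_i$ need not be divisors in $\A^n$; but the same mechanism (purity, and one simplicial suspension spent per removed stratum) can be run by hand.

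For the base case $r = 0$, the complement $\A^n \setminus L_0$ is empty when $L_0 = \A^n$, and otherwise, after an affine change of coordinates writing $L_0 = \{x_{d+1} = \dots = x_n = 0\}$ with $d = \dim L_0$ and $c = n-d \ge 1$, it is $\A^d \times (\A^c \setminus 0)$, which is weakly equivalent to $\A^c \setminus 0$ because $\A^d$ is $\A^1$-contractible; as $\A^c \setminus 0$ (a motivic sphere) is unstably cellular, $X$ is $0$-suspended cellular. I would use contractibility of $\A^d$ here rather than any product statement, since products of unstably cellular spaces need not be cellular (\cref{remark:products-bad-for-cells}).

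For the inductive step write $X = \A^n \setminus \bigcup_{i=0}^{r} L_i$ and put $U := \A^n \setminus \bigcup_{i=1}^{r} L_i$. By the induction hypothesis applied to the $r$ subspaces $L_1,\dots,L_r$, the open $U \subseteq \A^n$ is $(r-1)$-suspended cellular. Now $X = U \setminus Z$ with $Z := L_0 \cap U$ a smooth closed subvariety of $U$, and $Z$ is the complement inside $L_0 \cong \A^{\dim L_0}$ of the arrangement $\{L_0 \cap L_i\}_i$ of at most $r$ linear subspaces, so by the induction hypothesis $Z$ too is $(r-1)$-suspended cellular (increasing the suspension count is harmless, since the simplicial suspension of a cellular space is cellular). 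Choosing coordinates so that $L_0$ is cut out in $\A^n$ by a linear regular sequence of length $c = \codim(L_0,\A^n)$ shows $N_{Z/U}$ is trivial of rank $c$, so homotopy purity yields a cofiber sequence of pointed motivic spaces $X_+ \to U_+ \to \Th(N_{Z/U}) \simeq (\Po)^{\wedge c}\wedge Z_+$. One step of the Barratt--Puppe rotation then gives $\Sigma X_+ \simeq \hocofib\bigl(U_+ \to (\Po)^{\wedge c}\wedge Z_+\bigr)$, whence $\Sigma^{r} X \simeq \hocofib\bigl(\Sigma^{r-1}U_+ \to (\Po)^{\wedge c}\wedge \Sigma^{r-1}Z_+\bigr)$ exhibits $\Sigma^{r}X$ as a homotopy cofiber of a map between cellular spaces; since cellular spaces are closed under homotopy colimits, $\Sigma^r X$ is cellular, i.e.\ $X$ is $r$-suspended cellular. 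The degenerate cases ($Z = \emptyset$, or nested/repeated $L_i$) only make the bookkeeping easier.

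The hard part will not be any estimate but keeping the homotopical inputs aligned. Specifically I must check that homotopy purity gives an honest cofiber sequence already in the \emph{unstable} pointed homotopy category, so that the single Barratt--Puppe rotation $\Sigma X_+ \simeq \hocofib(U_+ \to \Th(N_{Z/U}))$ is legitimate without passing to spectra; that $N_{Z/U}$ is genuinely trivial even though $Z$ is only locally closed in $\A^n$ (immediate from linearity of the $L_i$, but worth spelling out via the global linear equations of $L_0$); and --- the point that fixes the exponent in the statement --- that exactly one suspension is consumed per subspace removed, which is what pins the bound at $r$ rather than something larger. A minor additional item is the comparison of pointed and unpointed cellularity (e.g.\ $U$ cellular $\Rightarrow U_+$ cellular), which is part of the standard Dugger--Isaksen toolkit.
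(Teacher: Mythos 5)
Your proposal is correct and follows essentially the same route as the paper: induction on the number of subspaces, with the base case $\mathbb{A}^n\setminus L_0 \simeq \mathbb{A}^{n-k}\setminus\{0\}$, triviality of the normal bundle of $L_0\cap U$ in $U$ (the paper invokes \cref{thm:quillen-suslin}, you read it off the global linear equations --- same content), homotopy purity, and one Barratt--Puppe rotation costing exactly one simplicial suspension per subspace removed. The points you flag as needing care (unstable validity of the purity cofiber sequence, the pointed/unpointed bookkeeping) are handled the same way, implicitly, in the paper's own proof.
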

\begin{corollary*}
  A rank $r$ split torus $T = \mathbb{G}_m^{\times r}$ is $(r-1)$-suspended cellular.
\end{corollary*}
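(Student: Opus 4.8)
The plan is to realize the split torus as the complement of the coordinate hyperplane arrangement and then quote \cref{thm:subspace-arrangements}. Write $T = \mathbb{G}_m^{\times r}$ inside $\mathbb{A}^r = \Spec k[x_1,\dots,x_r]$ in the evident way, so that
\[
  T \;=\; \mathbb{A}^r \setminus \bigcup_{i=1}^r H_i, \qquad H_i := \{x_i = 0\}.
\]
Each $H_i$ is a codimension-$1$ \emph{linear} subspace of $\mathbb{A}^r$ (it contains the origin, so it is genuinely linear and not merely affine, which is what the hypothesis of \cref{thm:subspace-arrangements} requires), and the $H_i$ are pairwise distinct. Hence $\mathcal{A} := \{H_i\}_{i=1}^r$ is an arrangement of $r$ linear subspaces of the fixed affine space $\mathbb{A}^r$, with complement $T$.

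Now I would apply \cref{thm:subspace-arrangements} after a harmless reindexing. The theorem is stated for an arrangement $\{L_j\}_{j=0}^{m}$ of $m+1$ linear subspaces of some $\mathbb{A}^n$ and concludes that the complement is $m$-suspended cellular; taking $n = r$, $m = r-1$, and $\{L_0,\dots,L_{r-1}\} = \{H_1,\dots,H_r\}$ gives exactly that $T = \mathbb{A}^r \setminus \bigcup \mathcal{A}$ is $(r-1)$-suspended cellular, as claimed.

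There is no genuine obstacle here — all the content sits in \cref{thm:subspace-arrangements}, and the corollary is essentially an observation. The only point needing care is the index bookkeeping: the ``$r$'' of the corollary is the rank of the torus, i.e.\ the \emph{number} of hyperplanes, which plays the role of ``$m+1$'' in \cref{thm:subspace-arrangements}, so the suspension degree drops by one from the naive count. (As a sanity check, the case $r=1$ reads ``$\mathbb{G}_m$ is $0$-suspended cellular'', i.e.\ already unstably cellular, consistent with the standard cell structure on $\mathbb{G}_m$.)
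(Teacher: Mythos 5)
Your proof is correct and is exactly the paper's argument: realize $T$ as the complement of the $r$ coordinate hyperplanes $\{x_i=0\}\subset\mathbb{A}^r$ (which are linear, as they pass through the origin) and apply \cref{thm:subspace-arrangements}, accounting for the off-by-one in its indexing of $r+1$ subspaces. Nothing further is needed.
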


\begin{convention*}
We use the notation $Z \closedto X$ for a closed immersion of $Z$ into $X$ and $U \opento X$ for an open immersion of $U$ into $X$ throughout.
\end{convention*}
\begin{explain}
Two results not due to the author were not easily available in the literature,
which is why we provide a proof here: the folklore \cref{lemma:bundles-are-equivalences} that affine Zariski bundles are $\Ao$-equivalences and Wendt's \cref{lemma:vector-bundles-are-sharp} that vector bundle projections are sharp maps.
\end{explain}

\begin{explain}
\textbf{Acknowledgments.}
I want to thank Matthias Wendt for his guidance and the FRIAS research focus ``Cohomology in Algebraic Geometry and Representation Theory'' for hospitality.
Except for \cref{thm:subspace-arrangements}, this work is a partial derivative of the first two chapters of the author's 2016 PhD thesis under supervision of Matthias Wendt at the University of Freiburg, Germany.
\end{explain}


\section{Motivic Spaces}\label{section:mothot}

\begin{explain}
  We model motivic spaces $\Spc(S)$ with simplicial presheaves on smooth finite type schemes over a Noetherian base scheme $S$, with the $\Ao$-local Nisnevich-local injective model structure. We always consider \emph{pointed} motivic spaces.
  This model category is well explained by Dugger~\cite{dugger01} and Dugger--Hollander--Isaksen~\cite{duggerhollanderisaksen04}.
  The idea to approach a homotopy theory of algebraic varieties in this way was introduced by Morel and Voevodsky~\cite{voevodskymorel99}, building on work of Jardine.
\end{explain}

\begin{defn}
A space $X \in \Spc(S)$ such that the basepoint $S \to X$ is an $\Ao$-equivalence is called \emph{$\Ao$-contractible}.
\end{defn}

\begin{warning}
A space $X$ which is $\Ao$-contractible need not be an affine space itself.
An ample supply of quasi-affine non-affine varieties which are $\Ao$-contractible is given by Asok and Doran~\cite{asokdoran07},~\cite{asokdoran08}.
Duboulouz and Fasel gave examples of smooth affine threefolds over fields of characteristic $0$ which happen to be $\Ao$-contractible but are not isomorphic to affine spaces~\cite{duboulozfasel15}.
\end{warning}

\subsection{Bundles on Schemes}

One source of $\Ao$-equivalences are affine bundle projections:
\begin{defn}\label{defn:affine-bundle}
  For $B$ a motivic space,
a Zariski locally trivial fiber bundle $p \colon E \epito B$ with fibers $p^{-1}(b)$ isomorphic to $\mathbb{A}^n$ is called an \emph{affine bundle} on $B$.
\end{defn}

\begin{lemma}\label{lemma:bundles-are-equivalences}
Let $p \colon E \epito B$ be an affine bundle of rank $n$, with $B$ smooth.
Then $p$ is an $\Ao$-weak equivalence.
\end{lemma}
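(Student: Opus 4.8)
The plan is to reduce the general case to the case of trivial bundles by a Nisnevich-local induction, using that $\Ao$-equivalences satisfy Nisnevich descent. First I would recall that an affine bundle $p \colon E \epito B$ in the sense of \cref{defn:affine-bundle} is Zariski locally trivial, so in particular Nisnevich locally trivial; over an open $U \subseteq B$ on which $p$ trivializes, $p^{-1}(U) \cong U \times \Ao[n]$ and the projection $U \times \Ao[n] \to U$ is an $\Ao$-equivalence because the product with an $\Ao$-contractible space (here $\Ao[n]$) induces an $\Ao$-equivalence — this is the motivic analogue of multiplying by a contractible factor, and follows formally from $\Ao$-homotopy invariance together with the fact that the model structure is monoidal.

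Next I would run a Noetherian induction on the base. Since $B$ is a smooth (hence Noetherian) scheme, choose a Zariski open $U \subseteq B$ on which $p$ trivializes, with closed complement $Z$; let $V = B \setminus W$ for a suitable smaller closed set so that $\{U, V\}$ is a Zariski (hence Nisnevich) cover of $B$ with $U \cap V$ and $V$ of strictly smaller dimension in the relevant sense, or more cleanly, cover $B$ by finitely many trivializing opens $U_1, \dots, U_m$ and induct on $m$. The restriction of $p$ to each $U_i$, to the intersections, and to the unions of fewer opens is again an affine bundle, so by induction $p$ is an $\Ao$-equivalence over each piece of the cover and over all the intersections. Because the injective Nisnevich-local $\Ao$-local model structure satisfies Nisnevich descent — the square associated to an elementary Nisnevich cover is homotopy cartesian — the map of Mayer--Vietoris (hocolim) diagrams induced by $p$ is a levelwise $\Ao$-equivalence, hence induces an $\Ao$-equivalence on homotopy colimits, which are $E$ and $B$ respectively. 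This closes the induction.

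The main obstacle I expect is purely bookkeeping: setting up the induction so that the comparison square for a two-element Nisnevich cover is genuinely homotopy cocartesian and that $p$ pulls the cover of $B$ back to a cover of $E$ compatibly, i.e.\ that restricting an affine bundle to an open stays an affine bundle and that $p^{-1}$ commutes with the relevant intersections. All of this is formal once one invokes the glueing property of the model structure. One subtlety worth a remark: the gluing/descent argument uses that $\Ao$-equivalences are stable under homotopy pushout along cofibrations, which is exactly the left properness of the model structure together with Nisnevich descent; this is standard for the Morel--Voevodsky category. I would therefore present the proof as: (1) trivial case via $\Ao$-invariance and monoidality; (2) Nisnevich descent / glueing; (3) Noetherian induction on a trivializing cover — and cite Dugger--Hollander--Isaksen~\cite{duggerhollanderisaksen04} for the descent properties of the model structure used in step (2).
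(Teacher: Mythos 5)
Your argument is correct, and it rests on the same underlying principle as the paper's proof --- trivialize over a Zariski cover and recover $E \to B$ as a map of homotopy colimits of the cover diagrams, where it is a levelwise equivalence $U_\alpha \times \mathbb{A}^n \to U_\alpha$ --- but the packaging differs. The paper does this in one step with the full \v{C}ech nerve $\check{C}^\bullet(\mathcal{U})$ of the trivializing cover, invoking~\cite[Theorem 1.2]{duggerhollanderisaksen04} to identify $\hocolim \check{C}^\bullet(\mathcal{U})$ with $B$ (and likewise for the pulled-back cover of $E$), which handles arbitrary, possibly infinite, covers without any induction. You instead induct on the number of trivializing opens using the two-open Mayer--Vietoris square; this needs a \emph{finite} trivializing cover, which is available here because $B$ is of finite type over a Noetherian base, and it only requires the elementary descent statement that a Zariski square is a homotopy pushout rather than the \v{C}ech-nerve theorem. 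Two small points of hygiene: the first half of your second paragraph (the $Z$ and $W$ business) is muddled, but your ``more cleanly'' version is the actual argument and it works, provided you note that in the inductive step the intersection $U_m \cap (U_1 \cup \dots \cup U_{m-1})$ is itself covered by the $m-1$ trivializing opens $U_i \cap U_m$, so the induction hypothesis applies to it; and the appeals to monoidality, left properness and Nisnevich (rather than Zariski) descent are all heavier than needed --- $X \times \mathbb{A}^n \to X$ is an $\Ao$-equivalence essentially by construction of the localization, open immersions are already cofibrations in the injective structure, and homotopy colimits send levelwise equivalences of diagrams to equivalences with no properness input.
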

\begin{proof}
By definition of a bundle, $B$ admits a Zariski cover $\mathcal{U} = {\{U_i \opento B\}}_{i \in I}$ and there are isomorphisms $\phi_i$ over $U_i$ from $U_i \times \mathbb{A}^n$ to $p^{-1} U_i$.
Choose (for convenience of stating the proof) a well-order on $I$.
For every word $\alpha = (i_1,\dots,i_k)$ of length $k$ over $I$ we let $\phi_\alpha$ be the restriction of $\phi_i$ to $U_\alpha := U_{i_1} \times_B \cdots \times_B U_{i_k}$ for $i = \min(\alpha)$.
As the \v{C}ech nerve of a cover is defined as $\check{C}^k(\mathcal{U}) = \coprod_{|\alpha|=k} U_\alpha$,
we get an isomorphism between $p^\ast \check{C}^k(\mathcal{U})$ and $\check{C}^k(\mathcal{U}) \times \mathbb{A}^n$ over $\check{C}^k(\mathcal{U})$, by disjoint union of the $\phi_\alpha$.

By definition of the $\Ao$-weak equivalences, the projections $\check{C}^k(\mathcal{U}) \times \mathbb{A}^n \epito \check{C}^k(\mathcal{U})$ are $\Ao$-weak equivalences, and so is the morphism $p^\ast \check{C}^k(\mathcal{U}) \epito \check{C}^k(\mathcal{U})$.
As any degree-wise weak equivalence of simplicial objects is a weak equivalence,
$p^\ast \check{C}^\bullet(\mathcal{U}) \epito \check{C}^\bullet(\mathcal{U})$ is an $\Ao$-weak equivalence.
By definition, $p^\ast \check{C}^\bullet(\mathcal{U}) = \check{C}^\bullet(p^\ast \mathcal{U})$, where $p^\ast \mathcal{U} := {\{ p^\ast U_i \opento E\}}_{i \in I}$ is the induced Zariski cover of $E$.
As the homotopy colimit of a \v{C}ech nerve is the space covered~\cite[Theorem 1.2]{duggerhollanderisaksen04}, we get a commutative diagram in which we know that all morphisms except possibly $p$ are $\Ao$-weak equivalences:

{\centering
\begin{tikzpicture}[arrows=->]
\node (hpC) at (0,0) {$\hocolim(p^\ast \check{C}^\bullet(\mathcal{U}))$};
\node (hC) at (5,0) {$\hocolim(\check{C}^\bullet(\mathcal{U}))$};
\node (E) at (0,-1) {$E$};
\node (B) at (5,-1) {$B$};
\draw (E) to node[above] {$p$} (B);
\draw (hpC) to (hC);
\draw (hpC) to (E);
\draw (hC) to (B);
\end{tikzpicture}\\
}
From the diagram we see that $p$ is also an $\Ao$-weak equivalence.
\end{proof}

\begin{lemma}[Wendt]\label{lemma:vector-bundles-are-sharp}
Let $S$ be either a field or a Dedekind ring with perfect residue fields and $R$ a smooth $S$-algebra.
For $E$ and $B$ two $R$-varieties and $p \colon E \to B$ a rank $n$ vector bundle projection,
the underived pullback $p^\ast$ preserves homotopy colimits (for $\Ao$-local weak equivalences). 
For a diagram $D \in \Spc(R)/B$ we can compute
\[\hocolim p^\ast D \simeq p^\ast \hocolim D.\]
\end{lemma}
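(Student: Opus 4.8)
The plan is to run the argument of \cref{lemma:bundles-are-equivalences} one categorical level higher: trivialise $p$ over a Zariski cover of $B$, pass to compatible trivialisations over the \v{C}ech nerve of that cover — where pullback along $p$ becomes a plain product with $\A^n$ — and glue back by \v{C}ech descent. Equivalently, the statement asserts that a vector bundle projection is a \emph{sharp} morphism in the sense of Rezk, and the computation below is a hands-on verification of this adapted to the $\Ao$-local structure. The hypotheses on $S$ are used only to guarantee that the Nisnevich-local injective model structure and its $\Ao$-localisation are well-behaved (bounded cohomological dimension, \v{C}ech descent).

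First I would pin down the formal inputs. Base change $p^\ast\colon \Spc(R)/B \to \Spc(R)/E$ is a left adjoint — simplicial presheaves form a topos, so $p^\ast$ admits the dependent product as a right adjoint — hence $p^\ast$ preserves all ordinary colimits and all fibre products. I also use three facts about the $\Ao$-injective structure, each local in nature: (i) it is cartesian with every object cofibrant, so for a fixed space $Y$ the functor $(-)\times Y$ is left Quillen, hence preserves $\Ao$-weak equivalences and homotopy colimits; (ii) restriction along a Zariski open immersion is left Quillen, hence preserves homotopy colimits; and (iii) \v{C}ech descent: for a Zariski cover $\{V_i \to Z\}$ the map $\hocolim_\bullet \check{C}^\bullet(\{V_i\}) \to Z$ is an $\Ao$-equivalence~\cite[Theorem 1.2]{duggerhollanderisaksen04}.

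Now the computation. Choose a Zariski cover $\mathcal{U} = {\{U_j\}}_{j\in J}$ of $B$ over which $p$ is trivial and, exactly as in the proof of \cref{lemma:bundles-are-equivalences}, compatible trivialisations $\phi_\alpha$ of $p$ over the pieces $U_\alpha$ of the \v{C}ech nerve, giving $p^\ast\check{C}^\bullet(\mathcal{U}) \cong \check{C}^\bullet(\mathcal{U})\times\A^n$ over $\check{C}^\bullet(\mathcal{U})$. For a diagram $D$ over $B$ indexed by a small category $I$: applying \v{C}ech descent to the cover $\{p^{-1}U_j \opento E\}$, naturally in $D$, yields $p^\ast D \simeq \hocolim_\bullet\bigl(p^\ast D \times_E p^\ast\check{C}^\bullet(\mathcal{U})\bigr)$; since $p^\ast$ commutes with fibre products and each $U_\alpha\opento B$ is open, the term of simplicial degree $k$ is identified, compatibly with the structure maps, with $(D\times_B\check{C}^k(\mathcal{U}))\times\A^n = \bigl(\coprod_{|\alpha|=k} D|_{U_\alpha}\bigr)\times\A^n$. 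I then take $\hocolim_I$ and drag it past $\hocolim_\bullet$ (Fubini for homotopy colimits), past $(-)\times\A^n$ (by (i)), and past $(-)\times_B\check{C}^k(\mathcal{U})$, which is a coproduct of restrictions to the opens $U_\alpha$ followed by inclusion over $B$ (by (ii)); this brings $\hocolim_I$ in until it reaches $D$, and running the identification backwards for the single space $\hocolim_I D \in \Spc(R)/B$ produces $\hocolim p^\ast D \simeq p^\ast\hocolim D$.

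The hard part is not this bookkeeping but the three local facts of the second paragraph — \v{C}ech descent and the left Quillen-ness of open restriction and of products — \emph{for the $\Ao$-localised structure}, not merely the Nisnevich-local one; this is exactly where $\Ao$-invariance and Nisnevich descent enter, and where one must verify that the left Bousfield localisation at $\Ao$ has been threaded through (hence these are cited rather than reproved). Conceptually this is Rezk's theorem that sharpness can be tested Zariski-locally on the target: the trivial bundle $\A^n\times B \to B$ is sharp because $(-)\times\A^n$ is left Quillen, a vector bundle is Zariski-locally trivial, and the passage to $\Ao$ is handled by noting that $p^\ast$ carries each generating $\Ao$-equivalence $W\times\A^1 \to W$ over $B$ to one of the same shape over $E$. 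Either way, once $p$ is known sharp, $p^\ast$ preserves monomorphisms and weak equivalences, hence is left Quillen for the injective structures on the slices and therefore commutes with homotopy colimits, which is the asserted formula.
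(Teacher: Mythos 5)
Your plan runs into a genuine obstruction: the ``three local facts'' you cite are true for the Nisnevich-local injective structure but your fact (ii) is \emph{false} for the $\Ao$-localised one, and that is exactly where the content of the lemma lives. Concretely, restriction along a Zariski open immersion is not left Quillen $\Ao$-locally and open immersions are not sharp: $\{0\} \monoto \mathbb{A}^1$ is an $\Ao$-weak equivalence over $\mathbb{A}^1$ whose pullback along $\Gm \opento \mathbb{A}^1$ is $\emptyset \to \Gm$. The underlying reason is that an $\Ao$-equivalence between objects over $B$ need not be witnessed by a zigzag staying over $B$, so your closing argument --- ``$p^\ast$ carries each generating $\Ao$-equivalence $W\times\Ao \to W$ over $B$ to one of the same shape'' --- does not propagate to all $\Ao$-equivalences in the slice; if it did, it would prove every morphism of schemes sharp, contradicting the example above. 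For the same reason ``sharpness can be tested Zariski-locally on the target'' is not available here: Rezk's locality results require descent (a model topos), and the $\Ao$-localisation is not left exact, so locality of sharpness is precisely the non-formal point, not a formal one. Relatedly, your \v{C}ech bookkeeping by itself only shows that $p^\ast$ commutes with the bar-construction model of $\hocolim_I D$ --- which holds strictly for \emph{any} map $p$, since $p^\ast$ preserves coproducts and realisations --- whereas the lemma is used (e.g.\ in \cref{lemma:compute-thom-along-equivalence}, pulling back the affine bundle $\tilde{U}_\alpha \to U_\alpha$) in the sharpness form: $p^\ast$ applied to an arbitrary $\Ao$-equivalence with target $B$ is again an equivalence.

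The missing idea is the one non-formal input of the paper's proof: by Asok--Hoyois--Wendt's affine representability, $\mathrm{B}\Sing^{\Ao}_\bullet\GL_n$ is $\Ao$-local under exactly the stated hypotheses on $S$ and $R$ (so these hypotheses are not, as you say, merely there to make the model structure well-behaved). This makes the frame bundle $\GL_n \to \mathcal{E} \to B$ an $\Ao$-local fibre sequence, hence sharp by Rezk's characterisation of sharp maps via fibre sequences; right properness then gives that pullback along $\mathcal{E} \to B$ preserves homotopy colimits, and one descends from the frame bundle to $E \to B$ Zariski-locally. Your instinct to reduce to the trivial bundle $\mathbb{A}^n \times U \to U$ (which \emph{is} sharp, since $(-)\times\mathbb{A}^n$ preserves $\Ao$-equivalences) is reasonable, but without a valid locality principle for sharpness after $\Ao$-localisation there is no way to glue these local statements; the detour through the $\GL_n$-torsor and the $\Ao$-locality of its classifying space is what replaces that gluing.
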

\begin{proof}
  Denote $\mathcal{E} \to B$ the associated frame bundle of $E \to B$, which is a $\GL_n$-principal bundle.
  Under the assumptions on $R$, the classifying space $B\Sing^{\Ao}_\bullet\GL_n$ is $\Ao$-local due to~\cite[Theorem 5.1.3 and the proof of Theorem 5.2.3]{ahw1}.
  As in~\cite[Proof of Theorem 4.6]{wendt11} for the case $G=\GL_n$,
  there is an $\Ao$-local fiber sequence $\GL_n \to \mathcal{E} \to B$,
  so in particular a simplicial fiber sequence.
  By Rezk's theorem~\cite[Theorem 4.1., (1) $\Leftrightarrow$ (3)]{rezk98} a map $\mathcal{E} \to B$ that induces a simplicial fiber sequence is sharp.
  Since the $\Ao$-local injective model category is right proper,
  pullback along a sharp map preserves homotopy colimits~\cite[Proposition 2.7]{rezk98}.
  Zariski-locally, we can recover the (homotopy) pullback along $E\to B$ from the pullback along $\mathcal{E} \to B$,
  hence globally the homotopy pullback is given by the underived pullback along $E \to B$. To see the last claim, compute the homotopy pullbacks of $X \to B$ by fibrant replacement of $X \to B$.
\end{proof}

\subsection{Motivic Spheres}

\begin{defn}
For $p,q \in \Integer$ with $p \geq q$, the motivic space
\[\S^{p,q} := {\left( {\Gm}_{,S} \right)}^{\wedge q} \wedge \S^{p-q} \in \Spc(S)\]
is called a \emph{motivic sphere}.
Here $\S^{p-q} := {\left(\S^1\right)}^{\wedge p-q} \in \sSet$ is a simplicial sphere, where
$\S^1 := \Delta^1/\partial \Delta^1$ (pointed by $\partial \Delta^1$) and ${\Gm}_{,S} := \Gm \times_\Integer S$ is the multiplicative group scheme over $S$ (pointed by the unit), where $\Gm(R)=R^\times$ for any ring $R$.
\end{defn}

\begin{example}
There is an $\Ao$-homotopy equivalence
\[ (\Po,0) \isoto \S^{2,1},\]
that is, an isomorphism in the homotopy category of $\Spc(S)$~\cite[Example 3.2.18]{voevodskymorel99}.
One can see this directly by writing $\Po = X \cup Y$ with $X = \Po \setminus \{0\}$ and $Y = \Po \setminus \{\infty\} \isoto X$,
so that $X \times_{\Po} Y = \Ao \setminus \{0\} \simeq \S^{1,1}$
and $X$ and $Y$ are both $\Ao$-contractible.
\end{example}

\begin{example}[{\cite[Example 3.2.20]{voevodskymorel99}}]\label{example:affine-space-origin-removed}
Affine space without origin is a motivic sphere:
\[\mathbb{A}^n \setminus \{0\} \simeq \S^{2n-1,n}\]
\end{example}

For odd-dimensional split quadrics, there is a well-known elementary argument to see that they are motivic spheres:
\begin{lemma}\label{lemma:odd-quadrics}
Let $AQ_{2n-1} := \{ (x,y) \in \A^n \times \A^n \setbar \sum_{i=1}^n x_i y_i = 1 \}$ (considered as affine algebraic variety over $\Integer$),
then $\pi \colon AQ_{2n-1} \epito \A^n \setminus \{0\}$ given by $(x,y) \mapsto y$ is a rank $n-1$ affine bundle and over any base scheme $S$ there is an isomorphism
\[AQ_{2n-1} \isoto \S^{2n-1,n}.\]
\end{lemma}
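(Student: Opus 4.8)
The plan is to check directly that $\pi$ is a Zariski-locally trivial bundle with $\A^{n-1}$-fibers, and then to conclude by combining \cref{lemma:bundles-are-equivalences} with \cref{example:affine-space-origin-removed}. First I would pin down the image of $\pi$: a point $(x,y) \in AQ_{2n-1}$ satisfies $\sum_i x_i y_i = 1$, which forces $y \neq 0$, while conversely, for any $y \neq 0$ there is (Zariski-locally on the base) an index $j$ with $y_j$ invertible, and then the point with $x_i = \delta_{ij}\, y_j^{-1}$ lies in $\pi^{-1}(y)$. So $\pi$ surjects onto $\A^n \setminus \{0\}$, which lies in $\Spc(S)$ and is smooth over $S$ as an open subscheme of $\A^n_S$.

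Next I would trivialize $\pi$ over the standard cover $U_j := \{\, y \in \A^n \setminus \{0\} : y_j \text{ invertible} \,\}$, $j = 1, \dots, n$. Over $U_j$ the morphism
\[ \pi^{-1}(U_j) \longrightarrow U_j \times \A^{n-1}, \qquad (x,y) \longmapsto \bigl( y,\, (x_i)_{i \neq j} \bigr) \]
is an isomorphism of $U_j$-schemes, with inverse sending $\bigl(y, (x_i)_{i \neq j}\bigr)$ to the point $(x,y)$ whose remaining coordinate is $x_j := y_j^{-1}\bigl(1 - \sum_{i \neq j} x_i y_i\bigr)$; this is a regular function on $U_j$ precisely because $y_j$ is invertible there, and one checks at once that $\sum_i x_i y_i = 1$ and that the two composites are identities. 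Hence $\pi$ is a rank $n-1$ affine bundle in the sense of \cref{defn:affine-bundle} (for $n = 1$ it is merely an isomorphism $AQ_1 \cong \Gm$). By \cref{lemma:bundles-are-equivalences}, $\pi$ is then an $\Ao$-weak equivalence, and composing with the equivalence $\A^n \setminus \{0\} \simeq \S^{2n-1,n}$ of \cref{example:affine-space-origin-removed} gives the asserted isomorphism in $\Ho(\Spc(S))$. As $AQ_{2n-1}$ and the charts $U_j$ are defined over $\Integer$, this all base-changes to an arbitrary $S$.

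I do not expect a genuine obstacle here; the only point worth a line of care is that \cref{lemma:bundles-are-equivalences} needs an honest Zariski-locally trivial fiber bundle and not merely a morphism whose fibers happen to be affine spaces, which is exactly what the explicit charts supply. In particular the vector bundle $H = \{(y,v) \in (\A^n \setminus \{0\}) \times \A^n : \sum_i v_i y_i = 0\}$, under which the fibers of $\pi$ form a torsor — stably free of rank $n-1$ but a priori not free — is irrelevant, since only local triviality enters.
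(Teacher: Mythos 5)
Your argument is correct and is essentially the proof given in the paper: the same standard cover $U_j = \{y_j \neq 0\}$, the same explicit trivialization by solving for $x_j$, and the same conclusion via \cref{lemma:bundles-are-equivalences} and \cref{example:affine-space-origin-removed}. The extra observations (surjectivity of $\pi$, the $n=1$ case, the irrelevance of the torsor structure under $H$) are accurate but not needed beyond what the paper records.
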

\begin{proof}
We can cover $\A^n \setminus \{0\}$ by the varieties $U_i := \{y_i \neq 0\}$,
over which $\pi^{-1}(U_i) = \{ (x,y) \in \A^n \times \A^n \setbar y_i \neq 0,\ \sum_{j=1}^n x_j y_j = 1 \}$
can be rewritten as
\[\pi^{-1}(U_i) = \left\{ (x,y) \in \A^n \times \A^n \ \Big{\vert}\  y_i \neq 0,\ x_i = y_i^{-1}\left({1 - \sum_{j=1,\ j\neq i}^n x_j y_j} \right) \right\}\]
so there are isomorphisms
\[\pi^{-1}(U_i) \isoto \A^{n-1} \times U_i,\qquad (x,y) \mapsto ((x_1,\dots,\hat{x_i},\dots,x_n),y).\]
For fixed $y \in \A^n \setminus \{0\}$, the equation for $x_i$ is linear in $x$ with $x_i$ removed, which guarantees that the inverse map is linear in the $\A^{n-1}$-component.

Now apply \cref{lemma:bundles-are-equivalences} and \cref{example:affine-space-origin-removed}.
\end{proof}


\section{Motivic Cell Structures}\label{section:motcells}

\begin{defn}[{Dugger and Isaksen~\cite[Definition 2.1]{duggerisaksen05}}]\label{defn:motcells}
Let $\mathcal{M}$ be a pointed model category and $\mathcal{A} \subset \mathop{Ob}\mathcal{M}$ a set of objects.
The class of \emph{$\mathcal{A}$-cellular objects} in $\mathcal{M}$ is defined as the smallest class of objects containing $\mathcal{A}$ that is closed under weak equivalence and contains all homotopy colimits over diagrams whose objects are all $\mathcal{A}$-cellular.
\end{defn}

\begin{defn}\label{defn:motcellsconcrete}
For the pointed model category of pointed motivic spaces $\Spc(S)$ let $\mathcal{A} := \{\S^{p,q} \setbar p,q \in \Natural,\ p \geq q\}$ be the set of motivic spheres.
The $\mathcal{A}$-cellular objects in $\Spc(S)$ are called \emph{motivically cellular}.
A motivic space $X$ with $k$-fold simplicial suspension $\Sigma^k X$ motivically cellular is called \emph{$k$-suspended cellular}.
For motivic spectra with $\mathcal{A}^s := \{\S^{p,q} \setbar p,q \in \Integer\}$,
we call $\mathcal{A}^s$-cellular objects \emph{stably motivically cellular}.
A motivic space $X \in \Spc(S)$ with $\Sigma^\infty_+ X$ stably motivically cellular is also called stably motivically cellular.
\end{defn}

\begin{defn}
  Given a morphism $f \colon X \to Y$ of motivic spaces, we define the \emph{homotopy cofiber} $\hocofib(f)$ as the homotopy colimit of the solid-lines diagram:
  
{\centering
\begin{tikzpicture}[arrows=->,scale=2]%
\node (X) at (-1,0) {$X$};
\node (point) at (-0.2,-0.4) {$\ast$};
\node (Y) at (0,0) {$Y$};
\node (hocofib) at (0.8,-0.4) {$hocofib(f)$};
\draw (X) to node[above] {$f$} (Y);
\draw (X) to (point);
\draw (point) [dotted] to (hocofib);
\draw (Y) [dotted] to (hocofib);
\end{tikzpicture}\\
}

We call a sequence $X \xrightarrow{f} Y \to Z$ in the homotopy category of $\Spc(S)$ a \emph{homotopy cofiber sequence} if the sequence is isomorphic to $X \xrightarrow{f} Y \to \hocofib(f)$ in the homotopy category.
\end{defn}

\begin{example}
Projective space $\mathbb{P}^n$ carries a motivic cell structure,
as there exists a homotopy cofiber sequence (compare~\cite[Proposition 2.13]{duggerisaksen05})
\[\mathbb{A}^n \setminus \{0\} \to \mathbb{P}^{n-1} \to \mathbb{P}^n\]
and $\mathbb{A}^n \setminus \{0\}$ is a motivic sphere $\S^{2n-1,n}$ (up to $\Ao$-homotopy equivalence~\cite[Example 2.11]{duggerisaksen05} for a proof of this claim first made by Morel and Voevodsky~\cite[Example 3.2.20]{voevodskymorel99}).
This homotopy cofiber sequence yields a distinguished triangle in the derived category of motives
\[M(\mathbb{P}^{n-1}) \to M(\mathbb{P}^n) \to \mathbbm{1}(n)[2n] \to\]
and one can show that the attaching map $\mathbb{A}^n \setminus \{0\} \to \mathbb{P}^{n-1}$ is $0$ at the level of motives for weight reasons,
hence there is a decomposition
\[M(\mathbb{P}^n) = \bigoplus_{i=0}^n \mathbbm{1}(i)[2i].\]
However, even on the level of spectra, in the stable motivic homotopy category,
the attaching map is non-trivial and $\bigvee_{i=0}^n \S^{2i,i}$ is a different motivic space with the same motivic decomposition as $\mathbb{P}^n$.
\end{example}

\begin{lemma}
If a motivic space $X$ admits a stable motivic cell structure,
its Voevodsky motive is of mixed Tate type.
\end{lemma}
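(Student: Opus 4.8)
The plan is to transport the stable cell structure forward along a realization functor into Voevodsky's triangulated category of motives. First I would fix a homotopy-colimit-preserving functor $M \colon \Spc(S) \to \DM(S)$ sending a smooth variety to its Voevodsky motive; for instance, one may take the composite of $\Sigma^\infty_+ \colon \Spc(S) \to \mathrm{SH}(S)$ with the free module functor $\mathrm{SH}(S) \to \DM(S)$ over the motivic Eilenberg--MacLane spectrum, which is a left adjoint between stable model categories and hence automatically commutes with homotopy colimits (under mild hypotheses on $S$; over a general base one takes $M$ to be Voevodsky's motive functor directly and uses the same formal input). The one computation to record is that $M$ sends each motivic sphere $\S^{p,q}$ to the Tate object $\mathbbm{1}(q)[p] \in \DM(S)$, which follows from $\tilde M(\Gm) \simeq \mathbbm{1}(1)[1]$ and $\tilde M(\S^1) \simeq \mathbbm{1}[1]$ together with the monoidality of $M$.

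Next I would unwind the definitions. By hypothesis $\Sigma^\infty_+ X$ lies in the smallest class of motivic spectra that contains the spheres $\S^{p,q}$ and is closed under weak equivalence and homotopy colimits (see \cref{defn:motcells} and \cref{defn:motcellsconcrete}). Since $M$ preserves weak equivalences and homotopy colimits, $M(X)$ then lies in the smallest class of objects of $\DM(S)$ that contains all Tate objects $\mathbbm{1}(q)[p]$ with $p,q \in \Integer$ and is closed under weak equivalence and homotopy colimits. It remains to embed this class into the mixed Tate motives, i.e.\ the localizing subcategory of $\DM(S)$ generated by the Tate twists: the smallest full triangulated subcategory that is closed under arbitrary coproducts and contains the $\mathbbm{1}(q)[p]$. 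A localizing subcategory is triangulated and closed under coproducts, and every homotopy colimit in a stable model category is assembled from coproducts and mapping cones, so such a subcategory is closed under homotopy colimits; hence $M(X)$ is mixed Tate.

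The only real point to be careful about is that the two closure operations match up: one needs $M$ to preserve homotopy colimits over \emph{arbitrary} index diagrams, not merely finite or filtered ones --- which is exactly why I would present it as a left adjoint between stable model categories, where this holds on the nose --- and dually one needs the localizing subcategory generated by the Tate objects to absorb those homotopy colimits, which is the standard fact recalled above. Once both are in place the argument is purely formal and the lemma follows.
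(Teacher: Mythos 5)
Your proposal is correct and takes essentially the same route as the paper: both reduce to the computation $M(\S^{p,q})=\mathbbm{1}(q)[p]$ (up to the basepoint summand) and to the observation that homotopy colimits are assembled from coproducts and cofiber sequences, hence are absorbed by the localizing subcategory of Tate objects. Your write-up is simply a more careful unwinding of the paper's one-line sketch, making explicit the homotopy-colimit-preserving realization functor that the paper leaves implicit.
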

\begin{proof}
  This follows directly from $M(S^{p,q}) = \mathbbm{1} \oplus \mathbbm{1}(q)[p]$ and the fact that the homotopy colimits defining the cell structure can be written as homotopy coequalizer and homotopy coproduct, which translates directly to distinguished triangles of mixed motives.
\end{proof}

\subsection{Motivic Thom Spaces}

\begin{theorem*}[Homotopy Purity, Morel and Voevodsky~{\cite[Thm. 3.2.23]{voevodskymorel99}}]\label[thmhp]{thm:hopurity}
For a closed immersion $\iota : Z \monoto X$ with open complement $U \monoto X$,
there is a natural homotopy cofiber sequence of pointed motivic spaces
\[ U \to X \to \Th(N_\iota)\]
where $\Th(N_\iota)$ denotes the \emph{Thom space} of the normal bundle $N_\iota$ of $\iota$,
which is defined using the zero section $Z \monoto N_\iota$
as $\Th(N_\iota) := N_\iota / {(N_\iota \setminus Z)} := \hocofib(N_\iota \setminus Z \monoto N_\iota)$.
\end{theorem*}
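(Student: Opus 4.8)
The plan is to follow Morel and Voevodsky's \emph{deformation to the normal cone}. Given the closed immersion $\iota\colon Z\monoto X$ of smooth $S$-schemes, I would first build the deformation space $D_Z X$ as the complement, inside the blow-up $\mathrm{Bl}_{Z\times\{0\}}(X\times\Ao)$, of the strict transform of $X\times\{0\}$. It is again a smooth $S$-scheme, it carries a map $D_Z X\to\Ao$ which restricts to the projection $X\times(\Ao\setminus\{0\})\to\Ao\setminus\{0\}$ over $\Ao\setminus\{0\}$ and whose fibre over $0$ is the total space of the normal bundle $N_\iota$, and it contains $Z\times\Ao$ as a closed subscheme which restricts to $\iota$ in the fibres over $t\neq 0$ and to the zero section $Z\monoto N_\iota$ over $0$. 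Setting $\mathcal{U}:=D_Z X\setminus(Z\times\Ao)$, the inclusions of the fibres over $1$ and over $0$ assemble into a commutative ladder whose three rows are the homotopy cofiber sequences $U\to X\to X/U$, $\;\mathcal{U}\to D_Z X\to D_Z X/\mathcal{U}$ and $\;N_\iota\setminus Z\to N_\iota\to\Th(N_\iota)$, and whose naturality in $(X,Z)$ is inherited from that of the blow-up construction.

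The heart of the argument is to show that the two comparison maps $X/U\to D_Z X/\mathcal{U}$ and $\Th(N_\iota)\to D_Z X/\mathcal{U}$ are $\Ao$-weak equivalences; homotopy purity then follows, since $U\to X\to X/U$ is a homotopy cofiber sequence by construction and the resulting zig-zag identifies $X/U$ with $\Th(N_\iota)$ compatibly with the map from $X$. For the two comparison maps I would first record the Nisnevich \emph{excision} property: the quotient sheaf $X/U$ depends only on a Nisnevich neighbourhood of $Z$ in $X$, so that an \'etale morphism $X'\to X$ inducing an isomorphism $Z\times_X X'\isoto Z$ induces an isomorphism $X'/U'\isoto X/U$, and likewise for $D_Z X$ and its complement $Z\times\Ao$. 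Because $\iota$ is a regular immersion of smooth schemes, \'etale coordinates identify $(X,Z)$ Nisnevich-locally with the standard pair $(\A^m\times\A^n,\ \A^m\times\{0\})$, and for the standard pair the spaces $X$, $N_\iota$ and $D_Z X$, together with their open complements of $Z$ resp.\ $Z\times\Ao$, differ from one another only by an evident affine-space factor, so both comparison maps are visibly $\Ao$-equivalences there. Globalising then amounts to checking that the formation of $D_Z X/\mathcal{U}$ and the two restriction maps are compatible with Nisnevich descent — that the relevant squares of pointed simplicial sheaves are homotopy-cocartesian and that a degreewise $\Ao$-equivalence of the associated \v{C}ech nerves is an $\Ao$-equivalence, exactly the type of argument used in the proof of \cref{lemma:bundles-are-equivalences}.

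The step I expect to be the main obstacle is precisely this descent bookkeeping: proving that passing to the quotient by $\mathcal{U}$ commutes, up to $\Ao$-weak equivalence, with restriction to the fibres over $0$ and $1$, even though $D_Z X\to\Ao$ is not itself a bundle. The reason it should work is that, \emph{relative to the closed subscheme $Z\times\Ao$}, the extra $\Ao$-direction of $D_Z X$ behaves like an affine-bundle coordinate: away from $Z\times\Ao$ there is enough room to write down an explicit $\Ao$-homotopy deforming the total space onto a single fibre, and the excision step confines the whole comparison to a Nisnevich neighbourhood of $Z\times\Ao$ where this structure is visible in the standard coordinates above. Arranging these homotopies so that they glue across a Nisnevich cover, and keeping track of base-points throughout, is the genuinely technical content; the remainder is formal manipulation inside the $\Ao$-local injective model structure on $\Spc(S)$.
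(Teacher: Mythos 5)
The paper does not prove this statement at all: it is imported verbatim from Morel--Voevodsky \cite[Thm.\ 3.2.23]{voevodskymorel99}, and the author explicitly lists only \cref{lemma:bundles-are-equivalences} and \cref{lemma:vector-bundles-are-sharp} as the borrowed results he reproves. Your sketch is precisely the deformation-to-the-normal-cone argument of the cited source --- the space $D_Z X$ inside the blow-up, the two comparison maps into $D_Z X/\mathcal{U}$, Nisnevich excision, and reduction to the standard pair $(\A^m\times\A^n,\A^m\times\{0\})$ via \'etale coordinates --- and it is correct in outline, including your identification of the globalisation step as the real technical work. The only point to flag is that you (rightly) assume $Z$ and $X$ are smooth over $S$, a hypothesis the paper's statement omits but which is essential for the \'etale-local trivialisation and is in force throughout the paper's setting.
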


\begin{warning}
  It is not necessarily true that a Thom space over a motivically cellular base is again motivically cellular (it is not known whether counterexamples exist or whether we simply lack a proof).
  This is also unknown for stable motivic cell structures. 
\end{warning}

\begin{remark}\label{remark:trivial-thom-spaces}
For this reason, and also to be able to describe a cell structure explicitly, it is highly desirable to trivialize normal bundles.
Thom spaces over trivial bundles are just suspensions (\cite[Proposition 3.2.17]{voevodskymorel99}):
\[\Th(\mathbb{A}^n \times B \to B) = B_+ \wedge \S^{2n,n}.\]
\end{remark}

The following are tools to trivialize vector bundles.
\begin{theorem*}[Quillen--Suslin]\label[thmqs]{thm:quillen-suslin}
Let $R$ be a smooth finite type algebra over a Dedekind ring.
Then all algebraic vector bundles on $\mathbb{A}^n_R$ are extended from $\Spec(R)$.
\end{theorem*}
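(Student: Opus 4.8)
The plan is to deduce this statement (the geometric case of the Bass--Quillen conjecture) from the classical Quillen--Suslin and Horrocks theorems by three successive reductions. Throughout, ``$P$ over $R[x_1,\dots,x_n]$ is extended from $\Spec R$'' means $P \cong P_0 \otimes_R R[x_1,\dots,x_n]$ for some finitely generated projective $R$-module $P_0$; since vector bundles on the affine scheme $\mathbb{A}^n_R$ are the same thing as finitely generated projective $R[x_1,\dots,x_n]$-modules, that is what we must exhibit.

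First I would reduce to $n = 1$. Because $R[x_1,\dots,x_{n-1}]$ is again a smooth finite type $D$-algebra and $\mathbb{A}^n_R = \mathbb{A}^1_{R[x_1,\dots,x_{n-1}]}$, the case $n=1$ applied to $R' := R[x_1,\dots,x_{n-1}]$ shows that any bundle on $\mathbb{A}^n_R$ is extended from $\mathbb{A}^{n-1}_R$, and an induction on $n$ then concludes. So from now on $P$ is a finitely generated projective $R[x]$-module. Next I would apply Quillen's local--global (patching) principle: the subset of $\Spec R$ over which $P$ becomes extended is open, so it suffices to check that $P_{\mathfrak m}$ is extended from $R_{\mathfrak m}$ for every maximal ideal $\mathfrak m \subset R$. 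Hence we may assume $R$ is local; being a localization of a smooth $D$-algebra, it is then a regular local ring, essentially smooth over $D$.

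The heart of the matter, and what I expect to be the main obstacle, is Lindel's étale-neighborhood argument for such a local $R$. Using Noether normalization together with the smoothness of $R$ over $D$, one constructs a subring $A \subset R$ which is the localization of a polynomial ring $D'[y_1,\dots,y_d]$ over a Dedekind ring $D'$ (a localization of $D$), such that $A \to R$ is essentially étale and induces an isomorphism on residue fields --- a ``Lindel pair''. For such a pair a Nagata-type patching argument, comparing the completions of $A$ and $R$ along their maximal ideals and using Horrocks' theorem to control the $x$-direction, transfers the conclusion from $A[x]$ to $R[x]$: if every finitely generated projective $A[x]$-module is extended from $A$, then the same holds over $R[x]$. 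This is precisely the step where one genuinely uses that $R$ is geometrically (not merely formally) regular over $D$, i.e.\ where a separability hypothesis on the residue fields enters; everything before and after it is formal.

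It then remains to prove the statement for $A = \bigl(D'[y_1,\dots,y_d]\bigr)_{\mathfrak q}$, i.e.\ that finitely generated projective $A[x]$-modules are extended from $A$. Here I would invoke the classical result directly: Quillen's patching theorem reduces the coefficient ring $D'[y_1,\dots,y_d]$ to its localizations, hence (running the induction on the number of $y$-variables) to the case of a DVR or a field as coefficient ring; over such a ring $V$, Horrocks' theorem --- a finitely generated projective $V[t]$-module that is extended after inverting $t$ is already extended from $V$ --- together with a monic-polynomial (Nagata/Suslin) trick yields freeness. This is exactly the content of the Quillen--Suslin theorem and its refinement over Dedekind rings, and it closes the argument.
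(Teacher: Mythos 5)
The paper does not actually prove this statement: it is quoted as a black box, with the proof deferred to Lam's book \cite[Theorem III.1.8]{lam06}, so there is no internal argument to compare yours against. What you have written is the standard proof skeleton for the geometric case of the Bass--Quillen conjecture --- which is what the statement really is; the name ``Quillen--Suslin'' properly refers to the special case where $R$ is a field or a Dedekind ring itself, whereas the general smooth-algebra case is Lindel's theorem. Your three reductions (induction to one variable; Quillen's local--global principle to localize the coefficient ring; Lindel's \'etale-neighborhood theorem plus a Nagata-style patching with Horrocks' theorem to descend from a localized polynomial ring) are exactly the accepted route, and you correctly single out the \'etale-neighborhood step as the one place where smoothness of $R$ over the Dedekind ring, rather than mere regularity, is genuinely used. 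As a roadmap it is correct; as a proof it is incomplete, since the Lindel pair construction, the transfer lemma across the \'etale extension, and the Horrocks/monic-polynomial endgame are named rather than carried out --- a full write-up would essentially reproduce several chapters of Lam's book. One point worth making explicit in your localization step: a maximal ideal $\mathfrak{m} \subset R$ may contract either to $(0)$ in the Dedekind ring $D$ (putting you in Lindel's original equicharacteristic situation over the fraction field) or to a maximal ideal $\mathfrak{p}$ (requiring the mixed-characteristic refinement of the neighborhood theorem over the discrete valuation ring $D_{\mathfrak{p}}$); your phrase ``a Dedekind ring $D'$ (a localization of $D$)'' silently covers both cases, which are treated separately in the literature.
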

This is beautifully explained in Lam's book~\cite[Theorem III.1.8]{lam06}.

It has been used to obtain a generalization, which one may see as a corollary to the vector bundle classification of Asok--Hoyois--Wendt:
\begin{lemma}\label{lemma:no-bundles-on-contractibles}
There are no non-trivial vector bundles on a smooth affine finite type $\Ao$-contractible variety $X$ over a Dedekind ring with perfect residue fields or a field.
\end{lemma}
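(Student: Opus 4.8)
The statement is: there are no non-trivial vector bundles on a smooth affine finite type $\mathbb{A}^1$-contractible variety $X$ over a Dedekind ring with perfect residue fields (or a field). Let me think about how to prove this.

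The key tool is the Asok–Hoyois–Wendt vector bundle classification: for $X$ smooth affine over a nice base, the set of isomorphism classes of rank $n$ vector bundles on $X$ is in bijection with the set of maps $[X, BGL_n]$ in the $\mathbb{A}^1$-homotopy category, or more precisely with $[X, B\mathrm{Sing}^{\mathbb{A}^1}_\bullet GL_n]$ (the affine representability theorem).

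So the plan:

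First, I would invoke the Asok–Hoyois–Wendt affine representability theorem (this is cited in the excerpt as \cite{ahw1}, in the proof of \Cref{lemma:vector-bundles-are-sharp}): for $X$ smooth affine finite type over a field or a Dedekind ring with perfect residue fields, the map sending a vector bundle to its classifying map induces a bijection
\[
\mathrm{Vect}_n(X) \;\xrightarrow{\ \sim\ }\; [X, BGL_n]_{\mathbb{A}^1},
\]
where the right-hand side is the set of morphisms in the pointed (or unpointed) $\mathbb{A}^1$-homotopy category. The essential point is that $B\mathrm{Sing}^{\mathbb{A}^1}_\bullet GL_n$ is $\mathbb{A}^1$-local under these hypotheses, so that naive $\mathbb{A}^1$-homotopy classes of maps from the affine $X$ already compute the full derived mapping set.

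Second, I would use the hypothesis that $X$ is $\mathbb{A}^1$-contractible: the structure map $X \to S$ is an $\mathbb{A}^1$-weak equivalence, so it induces a bijection $[S, BGL_n]_{\mathbb{A}^1} \xrightarrow{\sim} [X, BGL_n]_{\mathbb{A}^1}$. Now $[S, BGL_n]_{\mathbb{A}^1}$ classifies vector bundles on the base — or, chasing through the basepoint, the composite
\[
\mathrm{Vect}_n(X) \xrightarrow{\sim} [X, BGL_n]_{\mathbb{A}^1} \xleftarrow{\sim} [S, BGL_n]_{\mathbb{A}^1}
\]
pulls back bundles from $S$. But $X \to S$ admits a section precisely when... hmm, actually one does not need a section: any vector bundle $E$ on $X$ has classifying map $X \to BGL_n$ which factors up to $\mathbb{A}^1$-homotopy through $S$, and pullback of a rank-$n$ bundle from the base $S$ along a contractible $X$... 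I would conclude that $E$ is $\mathbb{A}^1$-homotopic, hence (by the bijection) isomorphic, to a bundle extended from $S$. If $S = \Spec k$ is a point this forces $E$ to be trivial; for a general Dedekind base one argues the classifying map lands in the image of $[\Spec S, BGL_n]$, and composing with a point of $X$ (which exists since $X$ is contractible, in particular non-empty, and has an $S$-point pulled back from the basepoint) shows the bundle is the pullback of its restriction to that point — but this only works fiberwise; the cleaner formulation is simply: trivial on $S = \Spec k$, and over a Dedekind ring the statement should be read as "extended from the base," which is how \Cref{thm:quillen-suslin} is phrased above too.

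The main obstacle — and the reason this is stated as a lemma needing the heavy AHW input rather than something elementary — is the passage from $\mathbb{A}^1$-homotopy classes of maps to actual isomorphism classes of bundles: a priori $[X, BGL_n]_{\mathbb{A}^1}$ only sees bundles up to $\mathbb{A}^1$-concordance, and one needs precisely the affine representability theorem (which requires $X$ affine and the $\mathbb{A}^1$-locality of $B\mathrm{Sing}^{\mathbb{A}^1}_\bullet GL_n$, hence the hypotheses on the base) to know that on affine $X$, $\mathbb{A}^1$-concordant bundles are isomorphic. Granting that, the argument is a two-line diagram chase using $\mathbb{A}^1$-contractibility.
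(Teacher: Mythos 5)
Your proposal matches the paper's proof essentially verbatim: the paper also invokes the Asok--Hoyois--Wendt affine representability theorem (\cite[Theorem 5.2.3]{ahw1}, stated there with the Grassmannian $\Gr_r$ rather than $B\GL_n$ as the representing object) to identify isomorphism classes of rank-$r$ bundles on the smooth affine $X$ with ${[X,\Gr_r]}_{\Ao}$, and then uses $\Ao$-contractibility to reduce to ${[\ast,\Gr_r]}_{\Ao}$. Your closing remark about the Dedekind base is a fair caveat that the paper's own proof glosses over (it writes the point as $\Spec(k)$ and asserts ${[\ast,\Gr_r]}_{\Ao}=1$, which strictly speaking requires reading ``trivial'' as ``extended from the base'' when the base has nontrivial Picard group), but the overall route is the same.
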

\begin{proof}
Let $f : \Spec(k) \isoto X$ be the isomorphism in the homotopy category of motivic spaces $\Ho\Spc(k)$ given by contractibility.
From~\cite[Theorem 5.2.3]{ahw1}, we know
\[{\left\{\text{rank }r\text{ vector bundles on } X\right\}}/_\simeq \isoto {[X,Gr_r]}_{\Ao} \underset{f^\ast}{\isoto} {[\ast,Gr_r]}_{\Ao} = 1.\qedhere\]
\end{proof}
This fails already for smooth non-affine quasi-affine varieties that are $\Ao$-contractible,
where one can give infinitely many counterexamples~\cite[Corollary 4.3.9]{asokdoranfasel15}.

\begin{corollary}\label{cor:hopurity-first-application}
If $V \monoto M$ is a codimension $c$ closed immersion of smooth varieties over a smooth finite type $\Integer$-algebra $R$,
and the complement $M \setminus V$ is an $\Ao$-contractible smooth affine $R$-variety,
and $V$ is the total space of a vector bundle $V \epito M'$ with $M'$ an $\Ao$-contractible $R$-variety,
$M$ is a motivic sphere $\S^{2c,c}$.
\end{corollary}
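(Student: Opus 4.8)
The plan is to apply homotopy purity to the closed immersion $\iota\colon V\closedto M$ and then collapse the complement. By \cref{thm:hopurity} there is a homotopy cofiber sequence
\[ M\setminus V \to M \to \Th(N_\iota), \]
where $N_\iota$ is the normal bundle of $\iota$, a rank $c$ vector bundle on $V$ since $\iota$ has codimension $c$. Because $M\setminus V$ is $\Ao$-contractible, its basepoint $\ast\to M\setminus V$ is an $\Ao$-equivalence; as homotopy cofibers are invariant under weak equivalence of the underlying diagram, $\hocofib(M\setminus V\to M)$ agrees with $\hocofib(\ast\to M)=M$ in the homotopy category. Hence the first step delivers $M\simeq\Th(N_\iota)$.

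Next I would trivialize $N_\iota$. The projection $V\epito M'$ is an affine bundle, hence an $\Ao$-equivalence by \cref{lemma:bundles-are-equivalences}, so $V$ inherits $\Ao$-contractibility from $M'$. Moreover $V$ is smooth (being a smooth subvariety of the smooth variety $M$), of finite type over the Dedekind ring $\Integer$ (whose residue fields $\mathbb{Q}$ and $\mathbb{F}_p$ are perfect), and affine — the structure morphism $V\to M'$ is affine, so $V$ is affine as soon as $M'$ is, which I take to be part of the hypothesis. Then \cref{lemma:no-bundles-on-contractibles} applies to $V$, and every vector bundle on it is trivial; in particular $N_\iota\cong\A^c\times V$.

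With a trivial normal bundle, \cref{remark:trivial-thom-spaces} gives $\Th(N_\iota)=\Th(\A^c\times V\to V)=V_+\wedge\S^{2c,c}$. Since $V$ is $\Ao$-contractible, $V_+\to\ast_+=\S^0$ is an $\Ao$-equivalence of pointed spaces, and smashing with $\S^{2c,c}$ preserves $\Ao$-equivalences (every object is cofibrant in the injective model structure), so $\Th(N_\iota)\simeq\S^0\wedge\S^{2c,c}=\S^{2c,c}$. Combined with $M\simeq\Th(N_\iota)$ from the first step, this yields $M\simeq\S^{2c,c}$, as claimed. The one genuine obstacle is the triviality of $N_\iota$: this is exactly where affineness of $V$ is indispensable, since \cref{lemma:no-bundles-on-contractibles} fails for merely quasi-affine $\Ao$-contractible varieties; the remaining steps are formal properties of homotopy cofiber sequences and of Thom spaces of trivial bundles.
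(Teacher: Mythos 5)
Your argument coincides with the paper's own proof in the case it treats first, namely when $M'$ is smooth affine: trivialize $N_\iota$ via \cref{lemma:no-bundles-on-contractibles} applied to the smooth affine $\Ao$-contractible $V$, identify $\Th(N_\iota)$ with $V_+\wedge\S^{2c,c}\simeq\S^{2c,c}$, and collapse the contractible complement in the homotopy purity cofiber sequence. The one point where you genuinely diverge is harmless and arguably cleaner: you get $\Ao$-contractibility of $V$ directly from \cref{lemma:bundles-are-equivalences} applied to $V\epito M'$, whereas the paper first trivializes $V\epito M'$ (by \cref{lemma:no-bundles-on-contractibles} applied to $M'$) and reads off contractibility of the total space $\A^c\times M'$.

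The gap is the extra hypothesis you import. The statement only asks that $M'$ be an $\Ao$-contractible $R$-variety, not that it be affine, and you explicitly declare affineness of $M'$ to be ``part of the hypothesis'' because your route to triviality of $N_\iota$ needs $V$ affine. The paper's proof has a second paragraph precisely for this case: when $M'$ is not smooth affine, $N_\iota$ may well be non-trivial, but $V\epito M'$ is still an $\Ao$-equivalence, so $V$ is still $\Ao$-contractible, and the paper concludes $\Th(N_\iota)\simeq{(\Po)}^{\wedge c}\wedge\S^0$ by invariance of Thom spaces under $\Ao$-equivalence of the base (the mechanism of \cref{lemma:compute-thom-along-equivalence}), with no appeal to triviality of the bundle. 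Since you already establish contractibility of $V$ without using affineness, the only step of yours that actually breaks in the general case is the deduction $N_\iota\cong\A^c\times V$; replacing it by the Thom-space-invariance argument would close the gap and recover the statement in its stated generality.
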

\begin{proof}
Assume that $M'$ is smooth affine as well.
Using \cref{lemma:no-bundles-on-contractibles},
the vector bundle $V \epito M'$ is trivial, so the total space $V$ is also smooth affine $\Ao$-contractible.
Using \cref{lemma:no-bundles-on-contractibles} again,
the normal bundle $N_\iota \epito V$ is trivial.
The Thom space of a trivial bundle of rank $r$ over a base $B$ is $\Ao$-homotopy equivalent to ${(\Po)}^{\wedge r} \wedge B_+$.
We conclude by using \cref{thm:hopurity}, which hands us a homotopy cofiber sequence
\[M\setminus V \to M \to \Th(N_\iota).\]
Contractibility of $M\setminus V$ implies that $M \to \Th(N_\iota)$ is a weak equivalence,
so $M \simeq {(\Po)}^{\wedge c} \wedge \S^0 \simeq \S^{2c,c}$.

Now if $M'$ is not smooth affine, $N_\iota \epito V$ may be non-trivial.
Since $V \epito M'$ is still a weak equivalence, $V$ is still $\Ao$-contractible,
hence $\Th(N_\iota) \simeq {(\Po)}^{\wedge c} \wedge \S^0$, as Thom spaces are invariant under $\Ao$-equivalence by definition.
\end{proof}

\begin{defn}\label{defn:totally-affinely-contractible}
For a variety $N$, a Zariski cover $\mathcal{U} = {(U_i \opento N)}_{i \in I}$ (with $N = \bigcup_{i \in I} U_i$) is called \emph{totally cellular} if the \v{C}ech nerve $\check{C}^\bullet(\mathcal{U})$ is a simplicial object in cellular varieties.
It is called \emph{totally contractible} if the $U_\alpha = \bigcap_{j \in J} U_j$ for each $J \subset I$ are $\Ao$-contractible.
It is called \emph{totally affinely contractible} if there are affine bundles $\tilde{U}_\alpha \to U_\alpha$ with affine total spaces $\tilde{U}_\alpha \cong \mathbb{A}^{m_\alpha}$, compatible with the simplicial structure on $\check{C}^\bullet(\mathcal{U})$ (assembling to an affine bundle $\check{C}^\bullet(\tilde{\mathcal{U}}) \to \check{C}^\bullet(\mathcal{U})$).
A variety that admits a totally affinely contractible (Zariski) cover is called \emph{atacc} for short.
\end{defn}
The definition of total cellularity was made in the stable context by Dugger and Isaksen~\cite[Definition 3.7]{duggerisaksen05},
see also~\cite[Lemma 3.8]{duggerisaksen05}.

From the definitions follows immediately
\begin{proposition}\label{prop:totally-cellular-is-unstably-cellular}
  A totally contractible Zariski cover is totally cellular and a totally affinely contractible Zariski cover is totally contractible.
  A variety admitting a totally cellular Zariski cover (in particular, an atacc variety) is unstably cellular.
\end{proposition}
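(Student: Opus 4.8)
The plan is to treat the three assertions one at a time, in each case reducing everything to the two closure properties of $\mathcal{A}$-cellular objects recorded in \cref{defn:motcells} --- stability under weak equivalence and under homotopy colimits of diagrams of cellular objects --- together with the computation, already used in the proof of \cref{lemma:bundles-are-equivalences}, that the homotopy colimit of the \v{C}ech nerve of a Zariski cover is the covered space~\cite[Theorem 1.2]{duggerhollanderisaksen04}. Throughout I would work with the varieties pointed by a disjoint basepoint; since $(-)_+$ is a left adjoint it commutes with forming \v{C}ech nerves and with homotopy colimits, so nothing is lost by doing so.

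For the implication ``totally affinely contractible $\Rightarrow$ totally contractible'', I would observe that each affine bundle $\tilde{U}_\alpha \to U_\alpha$ supplied by the cover has total space $\tilde{U}_\alpha \cong \mathbb{A}^{m_\alpha}$; the $U_\alpha$ are open in the smooth variety at hand, hence smooth, so \cref{lemma:bundles-are-equivalences} makes the projection an $\Ao$-weak equivalence and $U_\alpha \simeq \mathbb{A}^{m_\alpha}$ is $\Ao$-contractible, which is exactly what total contractibility of the cover demands. For ``totally contractible $\Rightarrow$ totally cellular'', I would note that an $\Ao$-contractible $U_\alpha$ gives $U_{\alpha+} \simeq S_+ = \S^{0,0}$, which lies in $\mathcal{A}$ (the motivic sphere with $p=q=0$) and is therefore cellular, while an empty intersection among the $U_\alpha$ gives $U_{\alpha+} = \ast$, the homotopy colimit of the empty diagram, hence also cellular. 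Then each degree $\check{C}^k(\mathcal{U})_+ = \bigvee_{|\alpha|=k} U_{\alpha+}$ of the \v{C}ech nerve is a homotopy colimit over a discrete diagram of cellular objects, so it is cellular; thus $\check{C}^\bullet(\mathcal{U})$ is degreewise cellular, which is the definition of total cellularity.

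Finally, for ``totally cellular $\Rightarrow$ unstably cellular'', I would invoke the \v{C}ech-nerve computation to get $X_+ \simeq \hocolim \check{C}^\bullet(\mathcal{U})_+$, a homotopy colimit over the small category $\Delta^{\mathrm{op}}$ of a diagram all of whose values are cellular, so $X_+$ is cellular and $X$ is unstably cellular in the sense of \cref{defn:motcellsconcrete}; the parenthetical claim about atacc varieties then follows by composing the three implications. I do not expect a genuine obstacle: the only care needed is the bookkeeping with disjoint basepoints and with the empty intersections that may occur among the $U_\alpha$, plus the remark that the smoothness hypothesis of \cref{lemma:bundles-are-equivalences} is automatic in the present (motivic) setting, and the statement is otherwise a formal unwinding of the definitions.
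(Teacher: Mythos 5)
Your argument is correct, and it coincides with what the paper intends: the paper gives no written proof, stating only that the proposition ``follows immediately from the definitions,'' and your three-step unwinding (affine bundle projections are $\Ao$-equivalences for the first implication, contractible pieces are $\S^{0,0}$ after adding a basepoint for the second, and the \v{C}ech-nerve homotopy colimit identification of~\cite[Theorem 1.2]{duggerhollanderisaksen04} for the third) is exactly that immediate unwinding. The care you take with disjoint basepoints and possibly empty intersections is appropriate and does not change the substance.
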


\begin{remark}\label{remark:products-bad-for-cells}
While smash products of unstably cellular spaces are again unstably cellular,
Dugger and Isaksen already noticed~\cite[Example 3.5]{duggerisaksen05} that it is in general hard to show whether a cartesian product of cellular spaces is unstably cellular.
Since it is easy to show that cartesian products of stably cellular spaces are stably cellular,
they only prove that Thom spaces of bundles over a totally cellular base are stably cellular~\cite[Corollary 3.10]{duggerisaksen05}.
As we are interested in unstable cell structures on spaces which are iterated Thom spaces,
we need a stronger statement:~\cref{thm:unstable-thom-cells}.
\end{remark}

\begin{lemma}\label{lemma:compute-thom-along-equivalence}
Let $p \colon E \epito B$ be a vector bundle and $B' \to B$ an $\Ao$-weak equivalence.
Then there exists 
a weak equivalence of Thom spaces
\[\Th(p) \isoto \Th(p').\]
\end{lemma}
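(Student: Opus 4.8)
The plan is to realise $\Th(p)$ and $\Th(p')$ as homotopy cofibres and to obtain the asserted equivalence as the comparison map of cofibre sequences induced by the given $\Ao$-weak equivalence. Write $f\colon B'\to B$ for that equivalence and let $p'\colon E'=E\times_B B'\to B'$ be the pullback vector bundle. Since the zero section $Z'$ of $E'$ is the pullback of the zero section $Z\closedto E$, we have $E'\setminus Z'=(E\setminus Z)\times_B B'=(E\setminus Z)\times_E E'$. The normal bundle of $Z\closedto E$ is $p$ itself, whence $\Th(N_\iota)=\Th(p)$, so \cref{thm:hopurity} gives homotopy cofibre sequences $E\setminus Z\to E\to\Th(p)$ and $E'\setminus Z'\to E'\to\Th(p')$, and $f$ induces a map from the second to the first. (One could equally avoid homotopy purity here: $E\setminus Z\opento E$ is an open immersion, hence a cofibration, so the strict quotient $E/(E\setminus Z)$ already computes the homotopy cofibre.)

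First I would show the middle vertical map $E'\to E$ is an $\Ao$-weak equivalence. As the square on $E',E,B',B$ is cartesian, this map is the base change of $f$ along the vector bundle projection $p$, which is sharp by Wendt's \cref{lemma:vector-bundles-are-sharp}; base change along a sharp map preserves $\Ao$-weak equivalences, so $E'\to E$ is one. (Alternatively, $p$ and $p'$ are affine bundles, hence $\Ao$-weak equivalences by \cref{lemma:bundles-are-equivalences}, and then two-out-of-three applies; the sharpness route has the advantage of not assuming that $B'$ is a smooth scheme.)

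Next I would show the left vertical map $E'\setminus Z'\to E\setminus Z$ is an $\Ao$-weak equivalence. Using $E'\setminus Z'=(E\setminus Z)\times_E E'$, this map is the restriction of the $\Ao$-weak equivalence $E'\to E$ along the open immersion $E\setminus Z\opento E$; since the restriction of an $\Ao$-weak equivalence along a Zariski-open immersion is again an $\Ao$-weak equivalence, it follows. Now a map of homotopy cofibre sequences that is an $\Ao$-weak equivalence on its first two terms is one on the cofibres, because homotopy colimits carry objectwise $\Ao$-weak equivalences to $\Ao$-weak equivalences; this yields an $\Ao$-weak equivalence $\Th(p')\to\Th(p)$, i.e.\ an isomorphism $\Th(p)\cong\Th(p')$ in $\Ho\Spc(S)$.

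The only step that is not pure formalism is recognising $p$ as a sharp map: pullback along the weak equivalence $f$ does not preserve weak equivalences in general, and it is exactly the sharpness of $p$ (Wendt's \cref{lemma:vector-bundles-are-sharp}) --- together with the elementary invariance of $\Ao$-weak equivalences under restriction along open immersions --- that lets one transport the relevant pieces across $f$. I therefore expect that to be the main obstacle; everything else is bookkeeping with homotopy cofibre sequences.
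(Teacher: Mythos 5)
Your proposal is correct and follows essentially the same route as the paper: both identify $\Th(p)$ and $\Th(p')$ as homotopy cofibres of the zero-section complements, deduce that $E\times_B B'\to E$ is a weak equivalence from the sharpness of vector bundle projections (\cref{lemma:vector-bundles-are-sharp}), observe that the complements of the zero sections correspond under pullback, and conclude by comparing the defining diagrams. The only cosmetic difference is that you justify the left-hand vertical equivalence by invariance under restriction to opens, whereas the paper leans on properness of the model structure (equivalently, sharpness of the sphere-bundle projection) for that step; either way the substance is identical.
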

\begin{proof}
Since vector bundle projections are sharp (\cref{lemma:vector-bundles-are-sharp}), the morphism $E \times_B B' \to E$ is a weak equivalence.
Let $s$ be the zero section of $p$ and $s'$ the zero section of the base change $p' \colon E \times_B B' \to B'$.
By construction of $s'$, we get a weak equivalence of $E \times_B B' \setminus s'(B')$ with $E \setminus s(B)$.
We proved that the diagrams whose homotopy colimits are $\Th(p)$ respectively $\Th(p')$ are weakly equivalent.
\end{proof}

\begin{corollary}
Let $p \colon E \epito B$ be a rank $n$ vector bundle and $B' \to B$ an affine bundle with $B' \cong \mathbb{A}^m$ (as varieties).
Then $\Th(p) \isoto B_+ \wedge \S^{2n,n} \simeq \S^{2n,n}$.
\end{corollary}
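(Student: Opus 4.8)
The plan is to pull the bundle back along $B'\to B$, where it becomes a vector bundle on an affine space and is therefore trivial, and then read off the Thom space. First I would note that $B'$ is smooth (it is isomorphic to $\mathbb{A}^m$), hence so is $B$, and that the affine bundle $B'\to B$ is an $\Ao$-weak equivalence by \cref{lemma:bundles-are-equivalences}. All of this takes place over a base that is a field or a Dedekind ring with perfect residue fields, as in \cref{lemma:vector-bundles-are-sharp}, so the results cited below apply. Applying \cref{lemma:compute-thom-along-equivalence} to the rank $n$ vector bundle $p\colon E\epito B$ and the $\Ao$-weak equivalence $B'\to B$ then gives a weak equivalence $\Th(p)\isoto\Th(p')$, where $p'\colon E\times_B B'\epito B'$ is the pullback bundle.

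The pullback $p'$ is a rank $n$ vector bundle on $B'\cong\mathbb{A}^m$, and this is the step that carries the content: by \cref{thm:quillen-suslin}, applied with $R$ the base ring, every vector bundle on $\mathbb{A}^m_R$ is extended from $\Spec R$, and since $B'\cong\mathbb{A}^m$ as a scheme over $R$ this forces $p'$ to be trivial, i.e.\ $E\times_B B'\cong\mathbb{A}^n\times B'$ over $B'$. By \cref{remark:trivial-thom-spaces} the Thom space of a trivial rank $n$ bundle over $B'$ is $B'_+\wedge\S^{2n,n}$. Finally $B'\cong\mathbb{A}^m$ is $\Ao$-contractible, so $B'_+\to\S^0$ is a weak equivalence and smashing with $\S^{2n,n}$ gives $B'_+\wedge\S^{2n,n}\simeq\S^{2n,n}$. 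Composing the equivalences yields $\Th(p)\isoto B'_+\wedge\S^{2n,n}\simeq\S^{2n,n}$, which is the assertion (with $B$ in the displayed formula read as $B'$; for a non-contractible $B$ the left-hand side is of course not $B_+\wedge\S^{2n,n}$ unless $p$ is already trivial).

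I do not expect a conceptual obstacle: the work is all in the cited lemmas. The one thing to be careful about is the alignment of hypotheses — one must ensure the vector bundle $p$ lives over a base over which both \cref{lemma:bundles-are-equivalences} (via \cref{lemma:vector-bundles-are-sharp}) and \cref{thm:quillen-suslin} are available, and that the isomorphism $B'\cong\mathbb{A}^m$ is genuinely an isomorphism of $R$-schemes, which is precisely what the phrase ``as varieties'' encodes, so that $p'$ really is a vector bundle on an affine space to which Quillen--Suslin applies.
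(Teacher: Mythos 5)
Your proof is correct and follows the same route as the paper, whose entire proof is the one-liner ``use \cref{lemma:bundles-are-equivalences} to apply \cref{lemma:compute-thom-along-equivalence} and then \cref{remark:trivial-thom-spaces}''; you have merely made explicit the Quillen--Suslin step trivializing the pulled-back bundle on $B'\cong\mathbb{A}^m$, which the paper leaves implicit (and does invoke explicitly in the analogous step of \cref{thm:unstable-thom-cells}). Your closing caveat about a non-contractible $B$ is moot: since $B'\to B$ is an $\Ao$-weak equivalence and $B'$ is $\Ao$-contractible, so is $B$, hence $B_+\wedge\S^{2n,n}\simeq B'_+\wedge\S^{2n,n}\simeq\S^{2n,n}$ and the displayed formula is correct as written.
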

\begin{proof}
We use \cref{lemma:bundles-are-equivalences} to apply \cref{lemma:compute-thom-along-equivalence} and then \cref{remark:trivial-thom-spaces}. 
\end{proof}

\begin{theorem}\label{thm:unstable-thom-cells}
Let $p \colon E \to B$ be an algebraic vector bundle of rank $r$ and $\mathcal{U} = {(U_i \opento B)}_{i \in I}$ a totally affinely contractible Zariski cover of $B$, all defined over a ring $R$ which is smooth and finite type over a Dedekind ring with perfect residue fields or a field.
Then $B$, $E$ and $\Th(p)$ are unstably cellular.
\end{theorem}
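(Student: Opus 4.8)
The plan is to treat $B$, $E$, and $\Th(p)$ one at a time, in each case exhibiting a suitable Zariski cover and invoking \cref{prop:totally-cellular-is-unstably-cellular}. One preliminary observation makes the arguments run smoothly: every $U_\alpha$ occurring in the \v{C}ech nerve of $\mathcal{U}$ is the base of an affine bundle $\tilde{U}_\alpha \to U_\alpha$ with smooth total space $\tilde{U}_\alpha \cong \A^{m_\alpha}$, and an affine bundle is a smooth surjection, so smoothness descends and each $U_\alpha$ --- hence $B$, and every variety we build from $B$ below --- is smooth; this is what lets us apply \cref{lemma:bundles-are-equivalences} throughout. The statement for $B$ is then immediate: $\mathcal{U}$ is totally affinely contractible by hypothesis, so $B$ is unstably cellular by \cref{prop:totally-cellular-is-unstably-cellular}.

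For $E$, I claim the pulled-back cover $p^{\ast}\mathcal{U} := (p^{-1}U_i \opento E)_{i \in I}$ is a totally contractible Zariski cover of $E$. Its \v{C}ech nerve has components $p^{-1}U_\alpha$, and each restriction $p|_{U_\alpha} \colon p^{-1}U_\alpha \to U_\alpha$ is a rank-$r$ vector bundle, in particular an affine bundle over a smooth base, hence an $\Ao$-weak equivalence by \cref{lemma:bundles-are-equivalences}. Therefore $p^{-1}U_\alpha$ is $\Ao$-equivalent to $U_\alpha$, which is $\Ao$-contractible because $\mathcal{U}$, being totally affinely contractible, is in particular totally contractible (cf.\ \cref{prop:totally-cellular-is-unstably-cellular}). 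The simplicial structure on $\check{C}^{\bullet}(p^{\ast}\mathcal{U})$ is the one inherited from $\check{C}^{\bullet}(\mathcal{U})$, so $p^{\ast}\mathcal{U}$ is totally contractible, and \cref{prop:totally-cellular-is-unstably-cellular} then shows that $E$ is unstably cellular.

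For $\Th(p)$, I would start from \cref{thm:hopurity} applied to the zero section $s \colon B \closedto E$, whose normal bundle is $E$ itself: this gives $\Th(p) = \hocofib\bigl(E \setminus s(B) \opento E\bigr)$. Cover $E$ by $p^{\ast}\mathcal{U}$ and $E \setminus s(B)$ by the restricted cover $\bigl(p^{-1}U_i \setminus s(U_i)\bigr)_{i \in I}$. Since the homotopy colimit of a \v{C}ech nerve is the space it covers~\cite{duggerhollanderisaksen04}, since homotopy cofibres commute with homotopy colimits, and since the level-$k$ term of the resulting diagram of Thom spaces is the wedge $\bigvee_{|\alpha|=k}\Th(p|_{U_\alpha})$, we obtain
\[\Th(p) \;\simeq\; \hocolim_{k}\ \bigvee_{|\alpha|=k}\Th\bigl(p|_{U_\alpha}\bigr).\]
Each $p|_{U_\alpha}$ is a rank-$r$ vector bundle on $U_\alpha$ equipped with the affine bundle $\tilde{U}_\alpha \to U_\alpha$, $\tilde{U}_\alpha \cong \A^{m_\alpha}$, so by the corollary to \cref{lemma:compute-thom-along-equivalence} --- which combines \cref{lemma:bundles-are-equivalences}, triviality of vector bundles on $\A^{m_\alpha}$ (\cref{thm:quillen-suslin}), and \cref{remark:trivial-thom-spaces} --- we have $\Th(p|_{U_\alpha}) \simeq \S^{2r,r}$, a motivic sphere. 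Hence $\Th(p)$ is a homotopy colimit of wedges of motivic spheres, and is therefore unstably cellular.

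The content is nearly all in the (routine) setup, so I do not expect a deep obstacle. The step demanding the most care is the displayed identity above: one must check that forming fibrewise Thom spaces commutes with the \v{C}ech-nerve presentation as a homotopy colimit --- that is, $E \simeq \hocolim_{\bullet}\check{C}^{\bullet}(p^{\ast}\mathcal{U})$, $E \setminus s(B) \simeq \hocolim_{\bullet}\bigl(\check{C}^{\bullet}(p^{\ast}\mathcal{U}) \setminus s\bigr)$ (which uses that the restricted cover still covers $E \setminus s(B)$), and that the levelwise homotopy cofibres really are the asserted wedges of the $\Th(p|_{U_\alpha})$ with the expected simplicial maps --- together with the small bookkeeping point, recorded above, that every variety occurring is smooth so that \cref{lemma:bundles-are-equivalences} applies without further ado.
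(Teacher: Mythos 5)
Your proposal is correct and follows essentially the same route as the paper: both reduce $\Th(p)$ to a homotopy colimit of levelwise homotopy cofibers over the \v{C}ech nerve of $\mathcal{U}$, and identify each levelwise Thom space with a motivic sphere by pulling back to $\tilde{U}_\alpha\cong\A^{m_\alpha}$ and applying \cref{thm:quillen-suslin} together with \cref{lemma:compute-thom-along-equivalence}. The only (harmless) divergences are that the paper identifies $\hocolim p^\ast\check{C}^\bullet(\mathcal{U})\simeq E$ via sharpness of $p$ (\cref{lemma:vector-bundles-are-sharp}) where you invoke the \v{C}ech-nerve descent statement directly, and that it deduces cellularity of $E$ from the $\Ao$-equivalence $E\simeq B$ rather than from a totally contractible cover of $E$.
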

\begin{proof}
  Unstable cellularity of $B$ is~\cref{prop:totally-cellular-is-unstably-cellular},
  cellularity of $E$ follows from~\cref{lemma:bundles-are-equivalences}, so it remains to show cellularity of the Thom space $\Th(p)$.
We use the morphism $l \colon \check{C}^\bullet(\mathcal{U}) \to B$ which induces a weak equivalence on homotopy colimits, i.e.\ 
$\hocolim\left(\check{C}^\bullet(\mathcal{U}) \right) \simeq B$.
The bundle $q \colon E \setminus B \to B$ obtained as sub-bundle of $p$ is a fiber bundle with fiber $\mathbb{A}^{r} \setminus \{0\}$.
The following diagram commutes:\\
{\centering
\begin{tikzpicture}[arrows=->]
\node (EmB) at (0,0) {$E \setminus B$};
\node (E) at (3,0) {$E$};
\node (Th) at (6,0) {$\Th(p)$};
\draw (EmB) to node[above] {$i$} (E);
\draw (E) to (Th);
\node (qU) at (0,2) {$q^\ast \check{C}^\bullet(\mathcal{U})$};
\node (pU) at (3,2) {$p^\ast \check{C}^\bullet(\mathcal{U})$};
\node (hcC) at (6,2) {$\hocofib(l^\ast i)$};
\draw (qU) to node[above] {$l^\ast i$} (pU);
\draw (qU) to node[left] {$q^\ast l$} (EmB);
\draw (pU) to node[left] {$p^\ast l$} (E);
\draw (pU) to (hcC);
\draw (hcC) [dashed] to (Th);
\end{tikzpicture}\\
}
The rows are homotopy cofiber sequences.
The middle column is a weak equivalence by \cref{lemma:vector-bundles-are-sharp}.
The left column is also a weak equivalence,
as it is the restriction of the middle column and the model structure is proper. 
(alternatively one could argue that spherical bundle projections are as sharp as vector bundle projections).
We inspect the first row more closely.
While the bundle $E$ might not trivialize over $U_i$,
its pullback to an affine space $\tilde{U}_i$
(given by the property of $U_i$ being totally affinely contractible) is trivial,
by \cref{thm:quillen-suslin}.
The same holds for each $U_\alpha$ with obvious definition of $\tilde{U}_\alpha$.
By \cref{lemma:compute-thom-along-equivalence} the Thom space of $E|_{U_i}$ is weakly equivalent to the Thom space of the pulled back bundles $E_{|\tilde{U}_i}$.
Now we can form a diagram, commutative up to homotopy\\
{\centering
\begin{tikzpicture}[arrows=->]
\node (qUa) at (0,2) {$q^\ast U_\alpha$};
\node (pUa) at (3,2) {$p^\ast U_\alpha$};
\node (hcCa) at (6,2) {$\hocofib(i|_{U_\alpha})$};
\draw (qUa) to node[above] {$i|_{U_\alpha}$} (pUa);
\draw (pUa) to (hcCa);

\node (S) at (0,0) {$\mathbb{A}^{n} \setminus \{0\} \times \tilde{U}_\alpha$};
\node (A) at (3,0) {$\mathbb{A}^{n} \times \tilde{U}_\alpha$};
\node (Q) at (6,0) {$\Sigma_s \left( \mathbb{A}^{n} \setminus \{0\}\right)$};
\draw (S) to (A);
\draw (A) to (Q);

\draw (S) to node[left] {$l^\ast i$} (qUa);
\draw (A) to node[left] {$p^\ast l$} (pUa);
\draw (Q) [dashed] to (hcCa);
\end{tikzpicture}\\
}
whose rows are homotopy cofiber sequences
and the leftmost two columns are weak equivalences.
Consequently, the last column is a weak equivalence.
This exhibits both $E \setminus B$ and $\Th(p)$ as homotopy colimit over cellular spaces.

We can view
$E \setminus B \to B$ obtained by composing $E \setminus B \to E$ with the bundle projection $p \colon E \to B$
as an explicit gluing map, as its homotopy colimit is again $\Th(p)$. 
\end{proof}

\begin{corollary}\label{cor:applied-hopurity-cool}
Given a sequence $M_i$ of smooth varieties over a ring $R$ which is smooth and finite type over a Dedekind ring with perfect residue fields or a field
\[M_n \supset M_{n-1} \supset \cdots \supset M_0 = \ast \supset M_{-1} = \emptyset\]
such that each $M_i$ is atacc~(\cref{defn:totally-affinely-contractible})
and rank $r_i$ vector bundles $V_i \epito M_{i-1}$ together with a closed immersion of the total space $V_i \closedto M_i$ of codimension $c_i$,
and each complement $X_i := M_i \setminus V_i$ is $\Ao$-contractible,
there exists an unstable motivic cell structure on each $M_i$.
\end{corollary}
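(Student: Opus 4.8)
The plan is to induct on the index $i$, using homotopy purity to pass from $M_{i-1}$ to $M_i$. The base case is $M_0 = \ast$, the basepoint, which is the homotopy colimit of the empty diagram and hence $\mathcal{A}$-cellular (and $M_{-1} = \emptyset$ is handled vacuously). So suppose $M_{i-1}$ has already been shown to be unstably cellular; note that $V_i$, being the total space of a vector bundle over the smooth variety $M_{i-1}$, is itself smooth, so $\iota_i \colon V_i \closedto M_i$ is a closed immersion of smooth varieties.

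For the inductive step I would first apply homotopy purity (\cref{thm:hopurity}) to $\iota_i \colon V_i \closedto M_i$, whose open complement is $X_i$: this yields a homotopy cofiber sequence $X_i \to M_i \to \Th(N_{\iota_i})$, and since $X_i$ is $\Ao$-contractible the first map is, up to weak equivalence, the inclusion of the basepoint, so that $M_i$ is weakly equivalent to $\Th(N_{\iota_i})$ (the homotopy cofiber of the basepoint inclusion being the space itself), the Thom space of the rank $c_i$ normal bundle $N_{\iota_i} \to V_i$. To see that this Thom space is unstably cellular I would invoke \cref{thm:unstable-thom-cells}, which requires a totally affinely contractible Zariski cover of the base $V_i$. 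Such a cover arises by pulling back a totally affinely contractible cover $\mathcal{U}$ of $M_{i-1}$ along the bundle projection $p_i \colon V_i \to M_{i-1}$, whose \v{C}ech nerve is $p_i^\ast \check{C}^\bullet(\mathcal{U})$: over each affine-space member $\tilde{U}_\alpha \cong \A^{m_\alpha}$ of the cover, the restriction $p_i^{-1}(\tilde{U}_\alpha)$ is a vector bundle over an affine space, hence trivial by \cref{thm:quillen-suslin}, so $p_i^{-1}(\tilde{U}_\alpha) \cong \A^{r_i + m_\alpha}$, and these trivializations are compatible with the simplicial structure. Thus $V_i$ is atacc, \cref{thm:unstable-thom-cells} applies to $N_{\iota_i} \to V_i$ and exhibits $\Th(N_{\iota_i})$ as a homotopy colimit over motivic spheres, hence $M_i \simeq \Th(N_{\iota_i})$ is unstably cellular, closing the induction.

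I expect the main obstacle to be the verification that the affinely contractible structure genuinely propagates up the tower, namely that the total space $V_i$ of a vector bundle over an atacc variety is again atacc: one has to check the pulled-back data is compatible across the \v{C}ech nerve, and the application of \cref{thm:quillen-suslin} is cleanest when $R$ is a field or a principal ideal domain such as $\Integer$ (for more general $R$ one only gets bundles \emph{extended} from $\Spec R$ and must argue a little more carefully, or restrict the generality accordingly); one should also confirm that the ring hypotheses of \cref{thm:unstable-thom-cells} and \cref{thm:quillen-suslin} are satisfied at each stage. Note finally that if one takes the stated hypothesis that \emph{every} $M_i$ is atacc at face value, then \cref{prop:totally-cellular-is-unstably-cellular} already yields unstable cellularity of each $M_i$ directly; the value of the recursive argument is that it exhibits $M_i$ explicitly as an iterated Thom space and remains valid under the weaker assumption that only the bundle bases $M_{i-1}$ are known to be atacc.
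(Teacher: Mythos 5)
Your proposal is correct and follows essentially the same route as the paper: induction on $i$, homotopy purity plus $\Ao$-contractibility of $X_i$ to identify $M_i$ with $\Th(N_{\iota_i})$, and \cref{thm:unstable-thom-cells} for cellularity of the Thom space. You in fact supply a step the paper leaves implicit (verifying that $V_i$ is atacc by pulling back the cover of $M_{i-1}$ and trivializing via \cref{thm:quillen-suslin}), and your closing observation that the atacc hypothesis on all $M_i$ already gives cellularity directly, the recursion serving to exhibit the iterated Thom-space structure, is accurate.
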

\begin{proof}
We use induction on $i$, with the base case $M_0$ being trivially cellular.
Let $N_i \epito V_i$ be the normal bundle of the closed immersion $V_i \closedto M_i$.
As the complement $X_i$ is $\Ao$-contractible, by \cref{thm:hopurity}, applied as in the proof of \cref{cor:hopurity-first-application},
we get a weak equivalence $M_i \to \Th(N_i)$.
From our assumptions, $\Th(N_i)$ carries an unstable cell structure.
\end{proof}

\begin{remark}
If the ranks $r_i$ in \cref{cor:applied-hopurity-cool} are all $0$,
this resembles Wendt's unstable cell structure on generalized flag varieties using the Bruhat cells~\cite[Proposition 3.7]{wendtcell10}.
\end{remark}

We now prove that the complements of subspace arrangements are cellular after a finite amount of suspensions, depending on the number of subspaces.
\begin{proof}[{Proof of \cref{thm:subspace-arrangements}}]\label{proofof:subspace-arrangements}
  The base case is $\mathbb{A}^n$ with a single linear subspace $L_0$ of dimension $k$.
  By change of basis we move the linear subspace to the first $k$ coordinates so that
  $\mathbb{A}^n \setminus L_0 \isoto \left( \mathbb{A}^{k} \times \mathbb{A}^{n-k} \right) \setminus \mathbb{A}^k \times \{0\} \ =\ \mathbb{A}^k \times \left( \mathbb{A}^{n-k} \setminus \{0\}\right)$, an affine bundle over the motivic sphere $\mathbb{A}^{n-k} \setminus \{0\}$.

  By induction over the number of linear subspaces, assume $X := \mathbb{A}^n \setminus \bigcup_{i=1}^n L_i$ is $(n-1)$-suspended cellular. The intersection $L_0 \cap X = L_0 \setminus \bigcup_{i=1}^n (L_0 \cap L_i)$ is the complement of an arrangement of $n$ linear subspaces in the affine space $L_0$, hence $(n-1)$-suspended cellular by induction assumption.
  
  The normal bundle $N_0 \epito L_0 \cap X$ of $L_0 \cap X \closedto X$ is the restriction of the normal bundle of $L_0 \closedto \mathbb{A}^n$, hence trivial by \cref{thm:quillen-suslin}. Let $r_0 = n - \dim(L_o)$ be the rank of $N_0$. From this we see that the Thom space $\Th(N_0)$ is $\Ao$-homotopy equivalent to $\Sigma^{2k,k} \left(L_0 \cap X\right) = \Sigma^k\left( \left(\Gm\right)^{\wedge k} \wedge \left(L_0 \cap X\right) \right)$, hence $(n-1-k)$-suspended cellular.
  
  By \cref{thm:hopurity} we get a homotopy cofiber sequence
  \[ \mathbb{A}^n \setminus \bigcup_{i=0}^n H_i \to \mathbb{A}^n \setminus \bigcup_{i=1}^n H_i \to \Th(N_0) \to \Sigma \left( \mathbb{A}^n \setminus \bigcup_{i=0}^n H_i \right)\]
 which shows that the fourth space is $(n-1)$-suspended cellular as well. 
\end{proof}
A rank $r$ split torus is the complement of the $r$ coordinate hyperplanes $\{x_i = 0\} \subset \mathbb{A}^n$, hence we have shown that rank $r$ tori are $(r-1)$-suspended cellular.

Closely related to cellularity is the notion of \emph{linear varieties} which comes in several closely related versions (discussed by Janssen, Totaro, Joshua among others). We study two of them here.
\begin{defn}
  We call the empty scheme $\emptyset$ and any affine space $\mathbb{A}^n$ a $0$-linear variety.
  Inductively, for $n \in \Natural$ and $Z$ a $(n-1)$-linear variety with a closed immersion $Z \closedto X$ and open complement $U := X \setminus Z$, if either $X$ or $U$ is $(n-1)$-linear as well, then we call $Z,X,U$ \emph{$n$-linear} varieties.
  A variety is called \emph{linear} if it is $n$-linear for some $n$.
\end{defn}
\begin{lemma}\label{lemma:tori-linear}
A variety $Z$ that is isomorphic to a union of $r$ hyperplanes in $\mathbb{A}^n$ is $(r-1)$-linear and the complement in $\mathbb{A}^n$ is $r$-linear.
\end{lemma}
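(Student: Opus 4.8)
The plan is to prove both assertions together by induction on $r$, using only the definition of $n$-linearity; no motivic input is needed here. Two preliminary remarks streamline the argument. First, the classes of linear varieties are nested: every $k$-linear variety $X$ is $(k+1)$-linear, as one sees by feeding the closed immersion $\emptyset \closedto X$ with complement $X$ into the inductive clause ($\emptyset$ is $0$-linear, and $X$ is $k$-linear). Second, the statement about the complement follows from the statement about $Z$ together with this nesting: if a union $Z$ of $r$ hyperplanes is $(r-1)$-linear, then $Z \closedto \mathbb{A}^n$ is a closed immersion into the $0$-linear, hence $(r-1)$-linear, variety $\mathbb{A}^n$, so the clause makes $\mathbb{A}^n \setminus Z$ an $r$-linear variety. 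It therefore suffices to handle $Z$.

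For the base case $r = 1$: after an affine change of coordinates any hyperplane is a coordinate hyperplane, hence isomorphic to $\mathbb{A}^{n-1}$, which is $0$-linear.

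For the inductive step I would write $Z = Z' \cup H_r$ with $Z' = H_1 \cup \dots \cup H_{r-1}$ (assuming the $H_i$ pairwise distinct, and passing to fewer hyperplanes via nesting whenever the intersections below are empty or coincide). By induction $Z'$ is $(r-2)$-linear. The closed immersion $Z' \closedto Z$ has open complement $Z \setminus Z' = H_r \setminus \bigcup_{i<r}(H_i \cap H_r)$; since $H_r \cong \mathbb{A}^{n-1}$ and each $H_i \cap H_r$ is either empty or a hyperplane of $H_r$, this is isomorphic to the complement of an arrangement of at most $r-1$ hyperplanes in $\mathbb{A}^{n-1}$, hence $(r-1)$-linear by the complement assertion for $r-1$ (and nesting). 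With $Z'$ being $(r-1)$-linear and the complement $(r-1)$-linear, the inductive clause then exhibits $Z$ as a linear variety.

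The delicate point — which I expect to be the main obstacle — is the precise bookkeeping of the linear degree, i.e.\ showing that this process lands on degree exactly $r-1$ for the union. The crude split $Z = Z' \cup H_r$ is not optimal: the complement of $r-1$ hyperplanes in $\mathbb{A}^{n-1}$ is genuinely $(r-1)$-linear, not $(r-2)$-linear, since it carries a $\Gm$-factor and is not an affine space, so a naive application of the clause only gives that $Z$ is $r$-linear. To reach the sharp degree one must stratify $Z$ more economically — for instance, isolating an affine-space component whose removal leaves a variety of degree strictly below $r-1$, or splitting into the parallel and non-parallel configurations and anchoring the induction on a direct low-degree analysis. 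Once the degree of the union is under control, the complement statement is immediate from the first preliminary remark.
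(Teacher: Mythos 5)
Your decomposition is the same as the paper's: peel off one hyperplane $H$ from the arrangement, observe that the closed immersion of the remaining union into $Z$ has open complement the complement of a hyperplane arrangement inside $H\cong\mathbb{A}^{n-1}$, run an induction on $r$, and deduce the complement statement from the union statement by feeding $Z\closedto\mathbb{A}^n$ into the inductive clause together with the nesting of the classes. The difficulty you flag at the end is genuine, and you should know that the paper's own proof does not resolve it: it asserts that $H\setminus Z$ is ``isomorphic to the union of $r-1$ hyperplanes'', whereas --- as you correctly compute --- it is the \emph{complement} of up to $r$ hyperplane traces in $\mathbb{A}^{n-1}$, and by the induction hypothesis such a complement is only $r$-linear, not $(r-1)$-linear. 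That is exactly the off-by-one you decline to hide, so your accounting is more careful than the published argument.

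Do not spend time looking for a cleverer stratification: the sharp degree in the statement fails in general, already for $r=2$ intersecting hyperplanes. Unwinding the definition, a $1$-linear variety is either an affine space or $\emptyset$, or an open subvariety of an affine space whose closed complement is an affine space (hence it is smooth), or a variety that splits as a closed affine space with open complement an affine space. The coordinate cross $\{xy=0\}\subset\mathbb{A}^2$ is none of these: it is singular at the origin, and its only closed subvarieties isomorphic to affine spaces are points and the two axes, none of which has affine-space complement; so it is $2$-linear but not $1$-linear. (For $r$ pairwise disjoint hyperplanes your closed/open splitting does give $(r-1)$-linearity, but the case relevant to tori is the non-disjoint one.) The honest output of your induction is a weaker bound, e.g.\ that the union of $r$ hyperplanes is $(2r-2)$-linear and the complement is $(2r-1)$-linear, and that is all that is needed downstream: the lemma only feeds into the proof that very linear varieties are linear, where the precise degree is irrelevant. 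I would either weaken the stated degrees accordingly or simply assert that both varieties are linear.
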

\begin{proof}
 Assume $Z \closedto \mathbb{A}^n$ to be a union of $r$ hyperplanes and $H \closedto \mathbb{A}^n$ another hyperplane.
 Then $Z \closedto Z \cup H$ has complement $H \setminus Z$ isomorphic to the union of $r-1$ hyperplanes. By induction over $r$ we can assume $Z$ and $H \setminus Z$ to be $(r-1)$-linear, hence $Z \cup H$ is $r$-linear.
Since $\mathbb{A}^n$ is $0$-linear, it is also $r-1$-linear, so $\mathbb{A}^n \setminus Z$ is $r$-linear.
\end{proof}

\begin{defn}
Let $X$ be a $k$-variety with a filtration $F^i$ such that $F^i \closedto F^{i+1}$ is a closed immersion with open complement isomorphic to a disjoint union of varieties of the type $\mathbb{A}^n \times \Gm^{\times r}$. Then $X$ is called \emph{very linear}.
\end{defn}
\begin{proposition}
Very linear varieties are linear.
\end{proposition}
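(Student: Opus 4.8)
The plan is to derive this from \cref{lemma:tori-linear} together with two closure properties of the class of linear varieties, and then induct along the defining filtration. The first and key point is a \emph{monotonicity} lemma: if a variety $W$ is $m$-linear, then it is $m'$-linear for every $m' \geq m$. To see this, note first that $\emptyset$ is $m$-linear for all $m \geq 0$ (induct on $m$ using the closed immersion $\emptyset \closedto \emptyset$ with empty complement); then the closed immersion $\emptyset \closedto W$ with open complement $W$ promotes an $(m'-1)$-linear $W$ to an $m'$-linear $W$, and one iterates. This lemma is what makes the bookkeeping work, since the definition of $n$-linearity requires the closed subvariety and (one of) its ambient space or complement to be linear \emph{at the same level} $n-1$.

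Next I would check that a finite disjoint union of linear varieties is linear: for linear $W_1, W_2$, monotonicity lets us fix a common $n$ with both $n$-linear, and then the closed immersion $W_1 \closedto W_1 \sqcup W_2$ with open complement $W_2$ makes $W_1 \sqcup W_2$ into an $(n+1)$-linear variety; induct on the number of components. I would also record that each building block $\mathbb{A}^n \times \Gm^{\times r}$ is linear, since it is isomorphic to $\mathbb{A}^{n+r}$ with its last $r$ coordinate hyperplanes removed and hence is $r$-linear by \cref{lemma:tori-linear}. Consequently every graded piece $F^{i+1} \setminus F^i$ of the filtration, being a finite disjoint union of such blocks, is linear.

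Finally, writing the filtration as $\emptyset = F^{-1} \subseteq F^0 \subseteq \cdots \subseteq F^N = X$, I would induct on $i$: here $F^{-1} = \emptyset$ is linear, and if $F^i$ is linear then, choosing by monotonicity a common level $n$ for which both $F^i$ and $U := F^{i+1} \setminus F^i$ are $n$-linear, the closed immersion $F^i \closedto F^{i+1}$ with open complement $U$ exhibits $F^{i+1}$ as $(n+1)$-linear; the case $i = N$ is the assertion. I do not expect a genuine obstacle here — the only subtleties are the level-matching (handled by the monotonicity lemma) and the mild conventions that the disjoint unions appearing in the definition of very linear are finite, so that they remain varieties, and that the filtration may be taken to start at the empty scheme.
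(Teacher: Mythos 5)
Your proof is correct and follows essentially the same route as the paper: induct along the filtration, handle the strata via \cref{lemma:tori-linear}, and use closure under disjoint unions. You actually supply two details the paper leaves implicit --- the monotonicity lemma needed for level-matching in the definition of $n$-linearity, and the identification of $\mathbb{A}^n \times \Gm^{\times r}$ directly with a hyperplane-arrangement complement in $\mathbb{A}^{n+r}$, which neatly sidesteps the paper's unproved assertion that \emph{products} of linear varieties are linear.
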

\begin{proof}
Products and disjoint unions of linear varieties are again linear and the previous~\cref{lemma:tori-linear} shows that strata of the filtration are linear.
\end{proof}

\begin{remark}
While it is an easy exercise to show that the Voevodsky motive of a linear variety is of mixed Tate type,
it is not clear whether linear varieties are cellular,
as the Thom spaces involved in a homotopy purity argument are not necessarily cellular.
\end{remark}

\section{Cell Structures for Spherical Varieties}\label{section:spherical}

Let $G$ be a split reductive group. We will use the theory of spherical varieties, as detailed in the comprehensive book by Timashev~\cite{timashev11}.
A \emph{spherical variety} is a normal algebraic variety with an algebraic $G$-action such that a Borel $B \subset G$ acts with a dense orbit. As a consequence it has only finitely many $B$-orbits.

Special cases of spherical varieties are spherical homogeneous spaces such as affine quadrics, flag varieties $G/P$ and the homogeneous spaces $G/H$ that admit a two-orbit equivariant completion.

\begin{proposition}\label{prop:spherical-very-linear}
 $G$-spherical varieties are very linear.
\end{proposition}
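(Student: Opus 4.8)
The plan is to exploit the fact that a spherical variety $X$ is the union of its finitely many $B$-orbits, each isomorphic as a $k$-variety to some $\mathbb{A}^n\times\Gm^{\times r}$, and to assemble these orbits into a filtration ordered by dimension. As recalled above (following Timashev~\cite{timashev11}), $B$ acts on $X$ with only finitely many orbits $O_1,\dots,O_N$. Since $B$ is connected, each $O_i$ is irreducible and open in its closure, so $\overline{O_i}\setminus O_i$ is a $B$-stable closed subvariety of $\overline{O_i}$ of strictly smaller dimension, hence — by finiteness — a union of orbits, all of dimension $<\dim O_i$. Relabelling so that $\dim O_1\le\dim O_2\le\dots\le\dim O_N$, we obtain $\overline{O_i}\subseteq O_1\cup\dots\cup O_i$ for every $i$. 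Putting $F^0:=\emptyset$ and $F^i:=O_1\cup\dots\cup O_i$ (with reduced structure), each $F^i$ contains the closure of each of its orbits and is therefore closed in $X$; hence $F^{i-1}\closedto F^i$ is a closed immersion, $F^N=X$, and the open complement $F^i\setminus F^{i-1}$ is precisely the orbit $O_i$.

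It remains to identify each $O_i$. Because $G$ is split, $B=T\ltimes B_u$ is a connected split solvable $k$-group and $O_i\cong B/B_{x_i}$. One writes $B/B_{x_i}$ as an iterated tower of $\Ga$- and $\Gm$-torsors, using a composition series of $B$ with successive quotients $\Ga$ and $\Gm$; each such torsor splits because its base is, inductively, of the form $\mathbb{A}^a\times\Gm^{\times b}$ — an affine variety with vanishing $H^1(-,\mathcal{O})$ and trivial Picard group — so that $O_i\cong\mathbb{A}^{n_i}\times\Gm^{\times r_i}$. (Alternatively, one quotes the structure theorem for $B$-orbits on spherical varieties directly.) Thus each successive complement $F^i\setminus F^{i-1}$ is of the form demanded in the definition of a very linear variety, and we are done.

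The step carrying the real content is this identification of the $B$-orbits: both the statement that they are products $\mathbb{A}^n\times\Gm^{\times r}$, and the fact that this — together with the orbit decomposition and the filtration itself — is valid over the given field $k$ and not merely after base change to $\bar k$. The latter is where the hypothesis that $G$ is split is essential: the $B$-orbits are then individually Galois-stable and moreover ``split'', so isomorphic over $k$ (not just over $\bar k$) to $\mathbb{A}^n\times\Gm^{\times r}$. I would therefore be careful to invoke a form of the $B$-orbit structure theory for split spherical varieties that already incorporates this descent; the remaining points — the closure-order filtration and the verification that the immersions are closed — are routine bookkeeping.
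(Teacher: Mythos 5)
Your proof is correct and follows essentially the same route as the paper: it likewise filters $X$ by the closure-ordered union of $B$-orbits and identifies each orbit as $\mathbb{A}^n\times\Gm^{\times r}$, citing Rosenlicht's theorem on homogeneous spaces under split solvable groups for the step you establish via the torsor-tower and vanishing-of-$H^1(-,\mathcal{O})$/$\mathrm{Pic}$ argument. The bookkeeping of the filtration, which you spell out, is left implicit in the paper's two-line proof.
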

\begin{proof}
  By Rosenlicht~\cite[Theorem 5, page 119]{rosenlicht63},
any $k$-variety homogeneous under $B$ is isomorphic to $\Gm^\times{r} \times \mathbb{A}^n$ for some $r,n$. The $B$-orbit decomposition of a spherical variety $X$ therefore yields a filtration $F^i$ with strata the $B$-orbits, turning $X$ into a very linear variety.
\end{proof}

We now prove that every spherical variety admits stable motivic cell structures (composing ideas from Totaro~\cite[page 8, section 3 and Addendum]{totaro14a} and Carlsson--Joshua~\cite[Proposition 4.7]{carlssonjoshua11}).
\begin{proof}[Proof of \cref{thm:spherical-stably-cellular}]\label{proofof:spherical-stably-cellular}
  The filtration $F^i$ by $B$-orbits, as in the proof of~\cref{prop:spherical-very-linear} has the special property that the $B$-orbits are atacc, hence for each $\iota_i \colon F^{i-1} \closedto F^i$ the homotopy cofiber sequence~\cref{thm:hopurity}
  \[ F^i \setminus F^{i-1} \to F^i \to \Th(N_{\iota_i})\]
  has $F^{i-1}$ and $\Th(N_{\iota_i})$ stably cellular by induction.
\end{proof}

\begin{remark}\label{remark:easy-spherical-cells}
In the special case of wonderful completions, there is another proof of~\cref{thm:spherical-stably-cellular}:
  Let $X$ be a homogeneous spherical $G$-variety which admits a \emph{wonderful} equivariant completion $\overline{X}$ with boundary $Z$, i.e. $\overline{X}$ is smooth,
\[G/H = X \opento \overline{X} \closedfrom Z,\]
the boundary $Z$ has $r$ irreducible components, where $r$ is the rank of $X$,
and there is a unique closed orbit in $Z$, which is the intersection of all irreducible components of $Z$.
Furthermore, all open orbits of $Z$ are of lower dimension than $X$~\cite[Chapter 5, Definition 30.1]{timashev11}.

As $\overline{X}$ is a complete $G$-variety, we can apply the algebraic Morse theory of Bia{\l}ynicky-Birula, 
as Wendt proved~\cite[Corollary 3.5]{wendtcell10}, to obtain a stable motivic cell structure on $\overline{X}$.
The same applies to $Z$, so by a $2$-out-of-$3$-argument, as in the previous proof,
the variety $X$ is stably motivically cellular.
\end{remark}

One can control the amount of suspensions one has to perform to obtain an unstable cell structure, provided the boundary is atacc:
\begin{proof}[{Proof of~\cref{thm:cells-from-completion}}]\label{proofof:cells-from-completion}
  The case of $n=1$ is the situation $X \opento Y \closedfrom D$ with $D$ irreducible, smooth and attac.
  By \cref{thm:unstable-thom-cells} the space $\Th(N_{D \monoto Y})$ is unstably cellular,
  hence $\Sigma^k \Th(N_{D\monoto Y})$ is unstably cellular. 
  By assumption, $\Sigma^k Y$ is unstably cellular. By taking the $k$-fold suspension on the \cref{thm:hopurity} cofiber sequence and adding the next term, we get
  \[\Sigma^k X \to \Sigma^k Y \to \Sigma^k \Th(N_{D\monoto Y}) \to \Sigma^{k+1} X\]
  which shows that $\Sigma^{k+1} X$ is unstably cellular.

  For arbitrary $n$ we form $X' := Y \setminus D_n$ and the inclusion $X' \opento Y$ is again the $n=1$ case we just proved,
  so that $\Sigma^{k+1} X'$ is unstably cellular.
  The inclusion $X \opento X'$ has boundary $D \setminus D_n$,
  so by induction over $n$ we get that $\Sigma^{k+n} X$ is unstably cellular.
\end{proof}

As a pleasant surprise, the rank $1$ situation is particularly well-behaved:
\begin{theorem}\label{thm:unstable-cells-after-single-suspension}
  For homogeneous spaces $X$ that admit an equivariant completion $\overline{X}$ with a single closed orbit as boundary (two-orbit wonderful completions),
  there exists an unstable motivic cell structure on $\Sigma X$.
\end{theorem}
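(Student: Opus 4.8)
The plan is to deduce this from \cref{thm:cells-from-completion} together with structural facts about two-orbit wonderful completions. First I would observe that in the rank~$1$ situation the boundary $D \subset \overline{X}$ has a single irreducible component, namely the unique closed orbit, so $n = 1$ in the language of \cref{thm:cells-from-completion}. To invoke that theorem I need two inputs: (a) $\overline{X}$ is $1$-suspended cellular, and (b) the boundary divisor $D$ is atacc. Granting both, \cref{thm:cells-from-completion} with $n = 1$ and $k = 1$ yields that $\Sigma^2 X$ is unstably cellular. To get down to a \emph{single} suspension I instead want to apply the $n=1$ case directly with $k$ as small as possible, i.e.\ with the homotopy purity cofiber sequence
\[
  \Sigma X \to \Sigma \overline{X} \to \Sigma \Th(N_{D \monoto \overline{X}}) \to \Sigma^2 X,
\]
so that $\Sigma X$ is unstably cellular provided \emph{both} $\Sigma \overline{X}$ and $\Sigma \Th(N_{D \monoto \overline{X}})$ are unstably cellular and the cofiber sequence exhibits $\Sigma X$ (not $\Sigma^2 X$) as a homotopy colimit of cellular spaces. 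In fact the cleanest route is: $\overline{X}$ is a smooth complete $G$-variety, hence stably cellular by the Bia{\l}ynicki-Birula argument of Wendt (as recalled in \cref{remark:easy-spherical-cells}), and more to the point, being a wonderful variety it admits an atacc cover coming from the big Bruhat cells, so $\overline{X}$ is unstably cellular by \cref{prop:totally-cellular-is-unstably-cellular}; in particular $\Sigma \overline{X}$ is unstably cellular.

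Next I would handle the Thom space. The closed orbit $D$ is itself a projective homogeneous $G$-variety $G/P$, hence atacc (its Bruhat decomposition gives a totally affinely contractible cover), so by \cref{thm:unstable-thom-cells} applied to the normal bundle $N_{D \monoto \overline{X}} \epito D$, the Thom space $\Th(N_{D\monoto \overline{X}})$ is unstably cellular, and therefore so is $\Sigma \Th(N_{D\monoto \overline{X}})$. Now I apply homotopy purity (\cref{thm:hopurity}) to the closed immersion $D \monoto \overline{X}$ with open complement $X$: this gives a homotopy cofiber sequence $X \to \overline{X} \to \Th(N_{D\monoto \overline{X}})$. Suspending once, $\Sigma X \to \Sigma \overline{X} \to \Sigma \Th(N_{D\monoto\overline{X}})$ is again a homotopy cofiber sequence, and rotating it realizes $\Sigma X$ as the homotopy cofiber of $\Sigma^0(\text{something})$; more precisely $\Sigma \overline{X}$ and $\Sigma \Th(N_{D\monoto\overline{X}})$ are unstably cellular and $\Sigma X \simeq \hocofib\bigl(\Sigma^{-1}\Sigma\Th(N_{D\monoto\overline{X}}) \to \cdots\bigr)$ — so I should instead read the rotated triangle $\Sigma^{-1}\Th \to X \to \overline{X}$, which does not directly help since $\Sigma^{-1}$ is not available unstably. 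The correct bookkeeping, exactly as in the proof of \cref{thm:cells-from-completion} with $n = k = 1$, is that $\Sigma X$ appears as the \emph{fourth} term $\Sigma^{0+1} X$ only after we already know $\Sigma^0 \overline{X} = \overline{X}$ and $\Th(N_{D\monoto\overline{X}})$ are unstably cellular; since $\overline{X}$ is unstably cellular (not merely $1$-suspended) and $\Th$ is unstably cellular, the $k=0$ instance of \cref{thm:cells-from-completion} applies and gives that $\Sigma^{1} X$ is unstably cellular. That is the statement.

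The main obstacle is input (a) in its sharp form: I must be sure that the wonderful completion $\overline{X}$ is \emph{unstably} cellular (equivalently $0$-suspended cellular), not just stably cellular, since the Bia{\l}ynicki-Birula/Morse-theoretic cell structure of Wendt is a priori only a \emph{stable} cell structure. The resolution is that for a wonderful $G$-variety the open $B$-orbit together with its translates under the (finite) Weyl group gives a Zariski cover by open subsets each isomorphic, after an affine bundle, to affine space — i.e.\ $\overline{X}$ is atacc — and \cref{prop:totally-cellular-is-unstably-cellular} then upgrades this to an honest unstable cell structure. Verifying that this Bruhat-type cover is genuinely totally affinely contractible (that the intersections $U_\alpha$ of the cover members carry affine bundles with affine-space total space, compatibly with the simplicial structure of the \v{C}ech nerve) is the one place that needs care, and it is precisely the hypothesis ``$D$ is atacc'' and ``$\overline{X}$ is $1$-suspended cellular'' packaged into \cref{thm:cells-from-completion}; so in the write-up I would simply cite that theorem with $n=1$, $k=0$ (or $k=1$, tracking the single extra suspension), having first recorded that for a two-orbit wonderful completion both hypotheses hold because $\overline{X}$ and the closed orbit $D = G/P$ are projective homogeneous and hence atacc.
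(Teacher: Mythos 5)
Your overall strategy coincides with the paper's: verify that the boundary $D$ is atacc and that $\overline{X}$ is unstably cellular, then apply the $n=1$, $k=0$ case of \cref{thm:cells-from-completion} (i.e.\ the once-suspended homotopy purity cofiber sequence $\Sigma X \to \Sigma\overline{X} \to \Sigma\Th(N_{D\monoto\overline{X}}) $, read as exhibiting $\Sigma X$ in the fourth slot) to conclude that $\Sigma X$ is unstably cellular. Your treatment of $D$ is correct: the closed orbit is a projective homogeneous space $G/P$, its Bruhat cells are affine spaces, so it is atacc and \cref{thm:unstable-thom-cells} applies to the Thom space. The detour about rotating the triangle and $\Sigma^{-1}$ is noise, but you do land on the right bookkeeping.

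The genuine gap is your justification of the other input, namely that $\overline{X}$ is \emph{unstably} (not merely stably) cellular. You claim that ``the open $B$-orbit together with its translates under the Weyl group gives a Zariski cover by open subsets each isomorphic, after an affine bundle, to affine space.'' This step would fail: by Rosenlicht's theorem (the same one used in \cref{prop:spherical-very-linear}), the open $B$-orbit of a rank-one spherical variety is isomorphic to $\Gm\times\mathbb{A}^n$, not to an affine space, and there is no reason for finitely many Weyl translates of it to cover $\overline{X}$ --- that device is specific to generalized flag varieties. The missing idea is the classification of two-orbit equivariant completions (Ahiezer in characteristic $0$, Knop in positive characteristic), which identifies $\overline{X}$ as a projective homogeneous space under a possibly \emph{larger} reductive group; for instance $\overline{\HP^n}=\Gr(2,2n+2)$ is homogeneous under $\GL_{2n+2}$, not under $\Sp_{2n+2}$. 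Only after this identification can one invoke Wendt's unstable cell structure coming from the Bruhat decomposition to conclude that $\overline{X}$ is atacc, hence unstably cellular by \cref{prop:totally-cellular-is-unstably-cellular}. The Bia{\l}ynicki-Birula argument you offer as a fallback yields only a stable cell structure, which does not suffice here.
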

\begin{proof}
  By the classification of such $X \opento \overline{X}$ (due to Ahiezer in characteristics $0$~\cite{ahiezer83} and Knop in positive characteristics~\cite{knop14}),
  we know that $\overline{X}$ is a projective homogeneous space under a reductive group.
  We apply Wendt's unstable motivic cell structure~\cite[Theorem 3.6]{wendtcell10} arising from the Bruhat decomposition of $\overline{X}$ and $\overline{X} \setminus X$ (both are generalized flag varieties).
  Since the Bruhat cells of a generalized flag variety are affine spaces, the boundary $\overline{X} \setminus X$ is atacc.
  Now~\cref{thm:cells-from-completion} gives the conclusion.
\end{proof}
\begin{remark}
  For each such two-orbit completions, there exists a choice of Borel for the reductive group acting transitively on the completion such that the associated Schubert stratification restricts to a Schubert stratification on the boundary. One can compute explicitly the embedding and the complement for each Bruhat cell, where the embedding restricts to a linear embedding of an affine subspace into an affine space. With this observation, one can also use the complements, which are $1$-suspended cellular due to \cref{thm:subspace-arrangements}, to give a slightly different proof of \cref{thm:unstable-cells-after-single-suspension}.
\end{remark}

\begin{example}\label{example:hpn}
For $\HP^n = \Sp_{2n+2}/\Sp_{2}\times\Sp_{2n}$, the completion is a Grassmannian $\Gr(2,2n+2)$,
with complement the symplectic Grassmannian $\SpGr(2,2n+2)$ classifying symplectic planes in a $2n+2$-dimensional vector space with the standard symplectic form.
\Cref{thm:unstable-cells-after-single-suspension} hands us an unstable motivic cell structure for $\Sigma \HP^n$:
\[\Gr(2,2n+2) \to \Th\left(N_{\SpGr(2,2n+2) \monoto \Gr(2,2n+2)}\right) \to \Sigma \HP^n.\]
\end{example}
As mentioned in the introduction, there also exists a motivic cell structure for $\HP^n$ \emph{unsuspended}~\cite[Theorem 4.4.8]{voelkel16}.

\begin{example}\label{example:sigma-op2}
For the split octonionic projective plane $\OP^2 := \Ffour/\Spin_9$, the completion is the complex Cayley plane $\Esix/P_1$,
with complement an $\Ffour/P_4$.
\Cref{thm:unstable-cells-after-single-suspension} hands us an unstable motivic cell structure for $\Sigma \OP^2$:
\[\Esix/P_1 \to \Th\left(N_{\Ffour/P_4}\right) \to \Sigma \OP^2.\]
Unfortunately, there is still no known construction of a motivic cell structure for $\OP^2$ unsuspended. Attempts on a construction are in the author's thesis~\cite[Section 4.6]{voelkel16}.
\end{example}
%
%
%
%
\hbadness=3200
\interlinepenalty=10000

%
\end{document}